\newtheorem{theorem}{Theorem}[section]
\newtheorem{proposition}{Proposition}[section]
\newtheorem{lemma}{Lemma}[section]
\newtheorem{definition}{Definition}[section]
\newtheorem{myfigure}{Figure}[section]
\newtheorem{myequation}{Equation}[section]
\theoremstyle{definition} \newtheorem{example}{Example}[section]}
\newtheorem{formula}{Formula}[section]
\def\hpic #1 #2 {\mbox{$\begin{array}[c]{l} \epsfig{file=#1,height=#2}
\end{array}$}}
\def\vpic #1 #2 {\mbox{$\begin{array}[c]{l} \epsfig{file=#1,width=#2}
\end{array}$}}
\newcommand  {\rmn}\romannumeral
\newcommand {\5}{\vskip 5pt}
\begin{document}
\title[On spectral measures for certain unitary representations of $F$]{On spectral measures for certain unitary representations of R. Thompson's group F.}
\author{Valeriano Aiello and Vaughan F. R. Jones}
\address{Valeriano Aiello\\ 
Section de Math\'  ematiques, Universit\' e de Gen\` eve, 2-4 rue du Li\` evre, Case Postale 64,
1211 Gen\` eve 4, Switzerland
}\email{valerianoaiello@gmail.com}
\address{Vaughan F. R. Jones\\ Vanderbilt University, Department of Mathematics, 1326 Stevenson Center Nashville, TN, 37240, USA}\email{vaughan.f.jones@vanderbilt.edu}
\thanks{
V.A. is supported by the  Swiss National Science Foundation.
V.J. is supported by the grant numbered DP140100732, Symmetries of subfactors.}

\begin{abstract}
The Hilbert space $\mathcal H$ of backward renormalisation  of an anyonic quantum spin chain affords a 
unitary representation of Thompson's group $F$ via local scale transformations. Given a vector in the canonical dense
subspace of $\mathcal H$ we show how to calculate the corresponding spectral measure for any element of $F$ and
illustrate with some examples. Introducing the "essential part" of an element we show that the spectral measure of any
vector in $\mathcal H$ is, apart from possibly finitely many  eigenvalues, absolutely continuous with respect to Lebesgue
measure. The same considerations and results hold for the Brown-Thompson groups $F_n$ (for which $F=F_2$).

\end{abstract}
\maketitle
\tableofcontents

\section{Introduction.}
Let $F$ and $T$ be the Thompson groups as usual. In \cite{Jo16} an action of $F$ was shown to arise from a \emph{functor} from the category $\mathcal F$  whose objects are natural numbers and whose morphisms are 
planar binary forests, to another category $\mathcal C$.
Forests decorated with cyclic permutations of their leaves give a category $\mathcal T$ for which functors from $\mathcal T$ give actions of $T$.

The representations studied in \cite{Jo16} came from functors $\Phi$ to a trivalent tensor category (planar algebra) $\mathcal C$ in
the sense of \cite{MPS}, based on a specific "vacuum vector" $\Omega$ in the 1-box space of the tensor category.  A Thompson group 
element $g$ is represented by a pair of binary planar trees, drawn in the plane with one tree upside down on top of the other as below for an
element of $F$ that we will call X:
\begin{eqnarray}\label{elementX}
X= \vpic {X2} {1.5in}
\end{eqnarray}
 
The standard dyadic intervals defined by the leaves of the bottom tree are sent by $g$ (in the only affine way possible) to the corresponding intervals for the top tree. 

 If $\pi$ is the unitary representation defined by the (suitably normalised) trivalent vertex in $\mathcal C$, the coefficient 
 $$\langle \pi(g)\Omega, \Omega \rangle$$ is simply equal to the pair of trees of $g$ interpreted as a planar diagram (tangle) for
 $\mathcal C$! (Or more correctly the pair of trees as drawn is a multiple of a single vertical straight line, and that multiple is
$\langle \pi(g)\Omega, \Omega \rangle$.)
\section{Definitions.}

 An $n$-ary planar forest is the isotopy class of a disjoint union of   
trees,   
whose vertices are $n+1$-valent, 
embedded
in $\mathbb R^2$, all of whose roots lie on $(\mathbb R,0)$ and all of whose leaves lie on $(\mathbb R,1)$. 
 The isotopies are supported
in the strip $(\mathbb R,[0,1])$. 
The $n$-ary planar forests form a category
in the obvious way with objects being $\mathbb N$ whose elements are identified with isotopy classes of sets of points on a line and whose morphisms are the forests which can be composed by stacking a forest in 
$(\mathbb R,[0,1])$ on top of another, lining up the leaves of the one on the bottom with 
the roots of the other by isotopy then rescaling the $y$  axis to return
to a forest in  $(\mathbb R,[0,1])$. The structure is of course actually combinatorial but it is very useful to think of it in the way we have described.

We will call this category $\mathcal F_n$ and from now on we will assume $n\geq 2$.

\begin{definition}\label{genforest} 
Fix $m\in \mathbb N$. For each $i=1,2,\cdots,m$ let $f_i$ be the planar
$n$-ary forest with $m$ roots and $m+n-1$ leaves consisting of straight
lines joining $(k,0)$ to $(k,1)$ for $1\leq k\leq i-1$ and $(k,0)$ to 
$(k+n-1,1)$ for $i+1\leq k\leq m$, and a single $n$-ary tree with root
$(i,0)$, leaves $(i,1)$, $(i+1,1)$, \ldots , $(i+n-1,1)$,   
thus:
 m at the bottom right and m+n-1 at top right
$$
\includegraphics[scale=0.5]{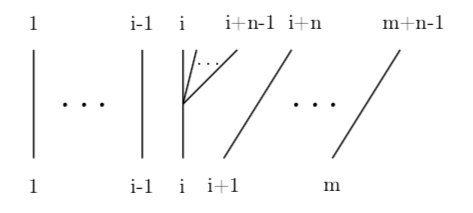}
$$
\end{definition}

Note that any element of $\mathcal F_n$ is in an essentially unique way a 
composition of morphisms $f_i$, the only relation being  
$f_j f_i= f_i f_{j-n+1}\mbox{   for  } i<j-n+1$.
The set of 
morphisms from $1$ to $k$ in $\mathcal F_n$ is the set  of $n$-ary planar rooted trees $\mathfrak T_n$ and is a \emph{directed set} with $s\leq t$ iff there is an $f\in \mathcal F_n$ with $t=fs$.

Given a functor $\Phi:\mathcal  F_n\rightarrow \mathcal C$ to a category $\mathcal C$ whose objects are sets, 
we define the direct system $S_\Phi$ which associates to each $t  \in \mathfrak T_n$, $t:1\rightarrow k$,
 the set $S_t:=\Phi(target(t))=\Phi(k)$. 
 For
each $s\leq t$ we need to give $\iota_s^t$. For this observe 
that there is an $f\in \mathcal F_n$ for which $t=fs$ so we
define, for $\kappa \in \Phi(target(s))$,
 $$\iota_s^t =\Phi(f)$$
 which is an element of $Mor_{\mathcal C}(\Phi(target(s)),\Phi(target(t)))$ as required. The $\iota_s^t$ trivially
 satisfy the axioms of a direct system.
 
  As a slight variation on this theme, given a functor $\Phi:\mathcal  F_n\rightarrow \mathcal C$ to \underline{any} category $\mathcal C$, 
and an object $\omega \in \mathcal C$, form the category $\mathcal C^\omega$ whose objects are the sets $Mor_{\mathcal C}(\omega,obj)$ for every 
object $obj$ in $\mathcal C$, and
whose morphisms are composition with those of $\mathcal C$. The definition of the functor $\Phi^\omega:\mathcal F_n\rightarrow \mathcal C^\omega$
is obvious.
  Thus the direct system $S_{\Phi^\omega}$ associates to each $t  \in \mathfrak T_n$, $t:1\rightarrow k$,
 the set $Mor_{\mathcal C}(\omega,\Phi(k))$. Given $s\leq t$ let  $f\in \mathcal F_n$ be such that $t=fs$. Then
for $\kappa \in Mor_{\mathcal C}(\omega,\Phi(target(s)))$,
 $$\iota_s^t(\kappa) =\Phi(f)\circ \kappa$$
 which is an element of $Mor_{\mathcal C}(\omega,\Phi(target(t)))$. 
 
 As in \cite{Jo16} we consider the direct limit:
$$ \underset{\rightarrow} \lim S_\Phi=\{(t, x) \mbox{ with } t \in  \mathfrak T_n, x\in \Phi(target(t))\} / \sim$$ 
 where $(t,x)\sim (s,y)$ iff there are $p, q\in \mathcal F_n$ with $pt=qs$ and $\Phi(p)(x)=\Phi(q)(y)$.
\vskip 5pt
\emph{We use $\displaystyle {t \over x}$ to denote the equivalence class of $(t,x)$ mod $\sim$.}
\vskip 5pt
The limit $ \underset{\rightarrow} \lim S_\Phi$ will inherit structure from
the category $\mathcal C$. For instance if the objects of $\mathcal C$ are Hilbert spaces and the morphisms are isometries then 
$ \underset{\rightarrow} \lim S_\Phi$ will be a pre-Hilbert space which may be completed to a Hilbert space which we will also call the direct
limit unless special care is required. We will denote this Hilbert space by $\mathcal H$.

As was observed in \cite{Jo16}, if we let 
 $\Phi$ be the identity functor and choose $\omega=1$,  
then  the inductive limit consists of equivalence classes of pairs $\frac{t}{x}$ where $t\in \mathfrak T_n$ and $x\in \Phi(target(t))=Mor(1,target(t))$.
But $Mor(1,target(t))$ is nothing but $s\in \mathfrak T_n$ with $target(s)=target(t)$, i.e. trees with the same number of leaves as $t$.
Thus the inductive limit is nothing but the Brown-Thompson group $F_n$ with group law $${r\over s}{s\over t}={r\over t}$$ (see \cite{Brown} for more information on the properties of these groups).   

 Moreover for any other functor $\Phi$,
$ \underset{\rightarrow} \lim S_\Phi$ carries a natural action of $F_n$ 
defined as follows:
$$\frac{s}{t}\left(\frac{t}{x}\right)=\frac{s}{x}$$ where $s,t\in \mathfrak T_n$ with $target(s)=target(t)=k$ and $x\in \Phi(k)$.  
A Brown-Thompson group element given as a pair of trees with $h$ leaves, and an element of
 $ \underset{\rightarrow} \lim S_\Phi$ given as a pair (tree with $k$ leaves, element of $\Phi(k)$), may not be immediately composable by the above formula, but they can always be ``stabilised'' to be so within their equivalence classes. 
 
 The Brown-Thompson group action preserves the structure of 
 $ \underset{\rightarrow} \lim S_\Phi$ so for instance in the Hilbert space case the actions define unitary representations.

We end this section by introducing an alternative description of the  
Brown-Thompson groups $F_n$, 
which is a straightforward extension of the 
one in
 \cite[Chapter 7]{B} and \cite{BM}
 where   Thompson's groups $F$, $T$, and $V$ are considered. The $F_n$ thus become diagram groups in the
 sense of \cite{GS}.
 \\ 
Let $h, k\in\mathbb{N}$, an $(h, k)$-strand diagram 
is a finite acyclic directed graph embedded in the
unit square $[0,1]\times [0,1]$, with the following properties
\begin{enumerate}
\item the graph has $h$ univalent sources along the top of the square, and $k$ univalent sinks along the bottom of the square;
\item every other vertex is 
$n+1$-valent, and is either a split or a merge (see Figure \ref{AAA}). 
\end{enumerate}

\begin{myfigure}
\label{AAA}
A merge and a split: 
 \vpic {fig4C} {3in}
\end{myfigure}
 
A reduction of an $(h,k)$-strand diagram is a move of type I and II shown in Figure \ref{reduction-fig}.

\begin{myfigure}\label{reduction-fig}
The three reduction moves. 

\qquad\vpic {fig5B} {4.5in}
\end{myfigure}

 An $(h,k)$-strand diagram is reduced if no more reductions are possible. 
Two $(h,k)$-strand diagrams are equivalent if one can be obtained from the other by a sequence of reductions and inverse reductions. 
We observe that every reduced $(1,1)$-strand diagram is uniquely determined by the composition of a tree and an inverse tree (see \cite[Theorem 7.1.6]{B} for the case $F=F_2$).  
Thanks to the strand diagrams, we have the following description of the Brown-Thompson groups $F_n$, cf. \cite[Proposition 2.5]{BM} and \cite{GS}.
\begin{theorem}  \label{BM-description-F}
Reduced 
$(1,1)$-strand diagrams form a groupoid over the positive integers, 
the composition 
being
 concatenation followed by reduction,
and the  inverse being 
reflection about a horizontal line. 
Moreover, the isotropy group 
at the point $1$ is isomorphic to the Brown-Thompson group $F_n$.
\end{theorem}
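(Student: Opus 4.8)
The plan is to verify the groupoid axioms directly from the combinatorics of strand diagrams and then to identify the isotropy group, the only genuinely delicate point being the uniqueness of the reduced form. First I would fix conventions: the objects are the positive integers, and a morphism from $k$ to $h$ is the equivalence class (under reductions and inverse reductions) of an $(h,k)$-strand diagram, which by hypothesis is represented by a unique reduced diagram. Composition of an $(h,k)$-diagram $D$ with a $(k,l)$-diagram $E$ is defined by stacking $E$ beneath $D$, identifying the $k$ sinks of $D$ with the $k$ sources of $E$, and then reducing; the resulting $(h,l)$-diagram is the composite $D\circ E$, a morphism $l\to h$. Reflection about a horizontal line sends an $(h,k)$-diagram to a $(k,h)$-diagram and interchanges splits and merges; this will be the candidate inverse.

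The crux is to show that every strand diagram has a \emph{unique} reduced representative, so that ``concatenation followed by reduction'' is a well-defined operation on equivalence classes. I would obtain this from Newman's lemma. Termination is immediate: each of the reduction moves of Figure \ref{reduction-fig} strictly decreases the finite number of internal vertices, so no infinite chain of reductions exists. Local confluence is then checked by enumerating the finitely many \emph{critical pairs}, i.e.\ the configurations in which the domains of two applicable reductions overlap; in each such case one verifies by hand that the two resulting diagrams can be brought to a common diagram by further reductions. Newman's lemma then yields confluence, hence a unique normal form. This is precisely the diagram-group machinery of \cite{GS}, and the same argument underlies \cite{BM}. With uniqueness in hand, well-definedness of composition follows: any reduction performed inside $D$ or inside $E$ is in particular a reduction of the stacked diagram, so equivalent inputs produce equivalent outputs.

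Granting this, the remaining axioms are routine. Strict concatenation (before reducing) is associative because stacking diagrams is associative, and confluence lets one reduce $(D\circ E)\circ G$, $D\circ(E\circ G)$ and the triple stack to the same normal form, giving associativity of the groupoid composition. The identity at the object $k$ is the reduced diagram $1_k$ consisting of $k$ parallel vertical strands with no internal vertices; stacking it above or below any diagram changes nothing after reduction. For inverses I would show that $D$ stacked above its reflection $D^{*}$ reduces to $1_h$: in this stack every split of $D$ sits directly above its mirror merge, producing a split-immediately-followed-by-merge pattern that is exactly the left-hand side of one of the reduction moves; removing the outermost such layer and inducting on the number of internal vertices collapses the stack to $1_h$, and symmetrically $D^{*}$ stacked above $D$ reduces to $1_k$. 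Hence every morphism is invertible and we obtain a groupoid.

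Finally, to identify the isotropy group at $1$, I would invoke the observation recorded just before the statement: a reduced $(1,1)$-strand diagram is uniquely a ``split tree'' $t$ (root at the top source) stacked above a ``merge tree'' $s$ (meeting the bottom sink), with $s$ and $t$ having no common caret, since a common caret would produce a split-then-merge dipole and hence be reducible. This is exactly the data of an ordered pair of $n$-ary trees in $\mathfrak{T}_n$ with the same number of leaves and no common caret, i.e.\ the reduced representative $\frac{s}{t}$ of an element of $F_n$ in the inductive-limit description of $F_n$ given above. The assignment $\frac{s}{t}\mapsto$ (the associated reduced $(1,1)$-diagram) is therefore a bijection, and one checks that concatenation-and-reduction of the diagram for $\frac{r}{s}$ with that for $\frac{s}{t}$ yields the diagram for $\frac{r}{t}$, matching the group law $\frac{r}{s}\frac{s}{t}=\frac{r}{t}$. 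Thus the isotropy group at $1$ is isomorphic to $F_n$, as claimed. The main obstacle throughout is the confluence argument; once the normal form is known to be unique, the groupoid structure and the identification with $F_n$ are bookkeeping.
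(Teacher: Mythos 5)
Your proof is correct and follows essentially the same route as the paper, which states this theorem without proof by deferring to \cite[Proposition 2.5]{BM} and \cite{GS}: the real content is the unique-normal-form property of the reduction rewriting system, obtained exactly as you do from termination plus local confluence via Newman's lemma, after which the groupoid axioms and the identification of reduced $(1,1)$-diagrams with reduced tree pairs (the paper's own observation citing \cite[Theorem 7.1.6]{B}) give the isomorphism of the isotropy group at $1$ with $F_n$. Nothing essential is missing.
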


Since the orientation of the edges is usually clear from the context, the arrows will be often suppressed.

As Belk and Matucci did for $F=F_2$ in \cite[Section 2.2]{BM}, 
given a $(k,k)$-strand diagram $g$, we can \emph{close} it, that is we may join the sinks to the sources, and obtain a graph $g'$ which embeds into the annulus (see Figure \ref{closure}).
This graph is called the \emph{closure} of $g$.

\begin{myfigure}
\label{closure}
The closure of a strand diagram.
$g=$\vpic {fig54B} {.5in} 
$\mapsto g'=$\vpic {fig53B} {1.1in} 
\end{myfigure}
The closure of a strand diagrams is, in particular, an \emph{annular strand diagram}. 
An annular strand diagram is 
a graph embedded in the annulus $[0,1]\times S^1$, without bivalent vertices, such that every vertex is a split or a merge and every directed cycle winds counterclockwise around the central hole \cite[Definition 2.9, p. 244]{BM}. \\
Given an annular strand diagram
we may apply the reduction moves mentioned above, along with a move of type III (see Figure \ref{reduction-fig}).
We stress that these moves should be interpreted in the relevant groups. This means that in the second move we have $n$ parallel  edges and in the third   we have $n$ concentric circles.
We mention that the operation of closure of strand diagrams was introduced by Belk and Matucci to provide a solution to the conjugacy problem in Thompson's group $F$. 
We would like to thank Dylan Thurston for pointing out to us the work of Belk and Matucci in \cite{BM}.

 \section{Planar algebras.}
 A planar algebra is a collection of vector spaces ($P_n$) whose elements can be combined in a multilinear way for every
 planar tangle - a planar tangle is a collection of disjoint ``input'' 
  discs inside an ``output''  disc. The discs are connected
 by smooth non-intersecting curves known as strings. Elements of $P_n$ ``go into'' an input disc with exactly $n$
 boundary points meeting the strings of the tangle.  Once all the input discs have vectors assigned to them, the output vector is
 in $P_m$ where $m$ is the number of points on the output disc meeting the strings of the tangle. Planar tangles can be glued one into another to produce new planar tangles and the
 operations on vectors are compatible with gluing. A planar algebra with dim($P_0)=1$ has a ``loop'' 
  parameter usually denoted  $d$ or $\delta$ such that any
 closed string (not meeting a disc of the tangle) can always be removed provided a multiplicative factor of $\delta$ is
 applied to the multilinear map. See \cite{jo2} for details.

For instance tensors  give a planar algebra where $P_n=\otimes^n \mathbb C^{k}$  (the indices of the tensors go from $1$ to $k$)
and the planar tangle is interpreted as a scheme for contracting the indices of the tensors, with indices assigned to strings. The loop
parameter here is $k$.

 If we take an $n$-ary planar forest $f$ with $p$ roots and $q$ leaves and surround it by an annulus with roots attached to the inner circle  and leaves to  the outer one, and enlarge to discs all the vertices of
 the forest we have a planar tangle all but one of whose input discs have $n+1$ strings attached to their boundaries. (The edges of the forest
 are the strings of the planar tangle.) We then choose a vector $R$ in the vector space $P_{n+1}$ and insert it at all the vertices of the forest, obtaining
 a labelled tangle which produces a linear map $\Phi_R(f)$ from $P_p$ to $P_q$ as below.
 \vskip 5pt
\hspace {40pt} \qquad \qquad Forest $f:3\rightarrow 6$ \qquad \qquad\quad $\Phi_R(f):P_3\rightarrow P_6$
 \vskip 5pt
\hspace {40pt} \qquad \qquad \vpic {foresttangle} {3in}
 \vskip 5pt
 The axioms of a planar algebra are more than enough to show that the linear maps $\Phi_R(f): P_p\rightarrow P_q$ given by these
 tangles define a functor from $n$-ary forests to vector spaces and thus a linear representation of the Brown-Thompson group $F_n$.
 There is in general no simpler description of the vector space on which the representation acts than as the direct limit of the
 $P_n$ according to the directed set of rooted planar $n$-ary trees. 
 
  Nothing at all is required of the element $R$ above. To make the  linear representations unitary one requires more structure of the planar algebra. First each $P_n$ is endowed
 with an antilinear involution $*$ which is compatible with orientation reversing diffeomorphisms of tangles in the obvious way.
 We further require that the space $P_0$ is one dimensional so there is a sesquilinear form $\langle\; ,\; \rangle$ on each $P_n$
 defined by 
    $\mbox{                       }    \; \; \qquad\qquad \qquad\qquad\qquad  \quad \langle X, Y \rangle=$\vpic {fig56} {1.35in} 
 \\
 where we have suppressed the output disc of the tangle since it does not meet any strings.
 A planar algebra is called a \emph{Hilbert planar algebra} if the resulting inner product is positive definite.
 
 If $(P_n)$ is a Hilbert planar algebra and we choose an element $R\in P_{n+1}$ it is clear that the representation of the
 Brown-Thompson group $F_n$ which we have constructed using $R$ will be unitary if
 
\begin{myequation}\label{unitarity}
 
 \hspace {6pt} \qquad\quad\qquad \vpic {unitarity} {2in}
 
 \end{myequation}
 Once we have fixed $R$ with the unitarity condition as above we can suppress it, and the input and output discs, from the diagrams so that the above condition becomes simply:

 \vskip 5pt
 
 \hspace {60pt}\qquad\qquad\qquad\qquad \vpic {unitaritysimple} {1in}
 
 There are many variants on how to make forests act on  the $P_n$.
 For instance  we can make the forest $f_{p,q}$ act from $P_{p+1}$ to $P_{q+1}$ by ignoring one boundary point and obtain
 another functor $\Pi_R$ as follows:
 
  \vskip 5pt
\hspace {40pt} \qquad Forest $f:3\rightarrow 6$ \qquad \qquad\quad $\Pi_R(f):P_4\rightarrow P_7$
 \vskip 5pt
\hspace {40pt} \qquad\qquad \vpic {foresttangle1} {3in}
 \vskip 5pt

 Both $\Phi_R$ and $\Pi_R$ give unitary representations $\phi_R$ and $\pi_R$ of $F_n$ respectively.
 
 The functor $\Pi_R$ has the advantage that there is a canonical unit vector $\Omega \in P_2$ given by
  $(\mbox{loop parameter}) ^{-1/ 2}$ times a string
 connecting the two boundary points of its disc. It is a simple exercise in the definitions to show then that the \emph
 {coefficient} $\langle \pi_R(g)\Omega, \Omega\rangle$ is given by tying the bottom of the pair of trees defining $g\in F_n$ to
 the top, blowing up the vertices and inserting $R$ and $R^*$ appropriately and evaluating the corresponding element of
 $P_0$. For instance if $g$ is the $X$ in the introduction we get:
 \begin{myfigure}\label{basicformula}
 $$\langle \pi_R(g)\Omega, \Omega\rangle={1\over \mbox{loop parameter}} \vpic {Xclosed} {1.2in} $$
 \end{myfigure}
 It is these representations $\pi_R$ that we will work with. The following theorem contains the only outcome of 
 the above construction that we will use.
 \begin{theorem}\label{reps} Given a Hilbert planar algebra $(P_n)$  and an element $R\in P_{n+1}$ satisfying
 equation \ref{unitarity} there is a unitary representation $\pi_R$ of the Brown-Thompson group $F_n$ on a Hilbert
 space ${\mathcal H }_R$ together with a vector $\Omega \in {\mathcal H }_R$ so that, if $\displaystyle g={ s\over t}\in F_n$ is given
 by a pair of trees then the coefficient $\langle\pi_R(g)\Omega, \Omega\rangle$ is equal to the evaluation in $P_0$ of
 the pair of trees interpreted in $P_0$ via insertions of $R$ and $R^*$ as an element of $P_0$ with the roots of $s$ and $t$ joined to each other as in figure 
 \ref{basicformula}.

 \end{theorem}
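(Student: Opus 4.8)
The plan is to assemble the ingredients already prepared above: the functor $\Pi_R$, its direct limit, the $F_n$-action, the distinguished vector $\Omega$, and finally the coefficient computation. First I would regard $\Pi_R$ as a functor from $\mathcal F_n$ into the category whose objects are the Hilbert spaces $P_{q+1}$ and whose morphisms are the linear maps produced by the labelled tangles described above. The essential point to verify here is that each $\Pi_R(f)$ is an \emph{isometry}: since by Definition \ref{genforest} every forest is a composition of the elementary forests $f_i$, and a composition of isometries is an isometry, it suffices to treat a single $f_i$. There the adjoint $\Pi_R(f_i)^\ast$ is computed by the reflected tangle with $R^\ast$ inserted, so that $\Pi_R(f_i)^\ast\,\Pi_R(f_i)$ is exactly the composite appearing in the unitarity condition (\ref{unitarity}); that condition asserts precisely that this composite is the identity. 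Hence the direct system $S_{\Pi_R}$ takes values in Hilbert spaces with isometric connecting maps $\iota_s^t=\Pi_R(f)$.

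Next I would form the completed direct limit $\mathcal H_R=\underset{\rightarrow}\lim S_{\Pi_R}$. Because the connecting maps are isometries, the formula $\langle \tfrac{t}{x},\tfrac{t}{y}\rangle:=\langle x,y\rangle_{P_{k+1}}$ is well defined on representatives sharing a denominator tree, is independent of the chosen representatives, and is positive definite; thus $\mathcal H_R$ is a genuine pre-Hilbert space, completed as usual. The $F_n$-action $\tfrac{s}{t}\bigl(\tfrac{t}{x}\bigr)=\tfrac{s}{x}$ is well defined on equivalence classes by the general remarks preceding the theorem, and it preserves the inner product: after stabilizing two vectors to a common denominator, the action only replaces the numerator tree while leaving the underlying element of $P_{k+1}$ untouched. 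It is therefore unitary and extends to the completion, giving $\pi_R$.

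It remains to exhibit $\Omega$ and identify the coefficient. I would take $\Omega$ to be the class $\tfrac{|}{\Omega_0}$, where $|$ is the trivial tree $1\to 1$ and $\Omega_0\in P_2$ is $(\text{loop parameter})^{-1/2}$ times the single string joining the two boundary points; this is a unit vector since capping $\Omega_0$ against itself produces one closed loop weighted by $(\text{loop parameter})^{-1}$, i.e. $1$. Given $g=\tfrac{s}{t}$ with $s,t$ having $k$ leaves, I would stabilize $\Omega$ along $t$ to write $\Omega=\tfrac{t}{\Pi_R(t)\Omega_0}$, whence $\pi_R(g)\Omega=\tfrac{s}{\Pi_R(t)\Omega_0}$, and stabilize the other copy of $\Omega$ along $s$ to write it as $\tfrac{s}{\Pi_R(s)\Omega_0}$. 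Since both now share the denominator $s$,
$$\langle \pi_R(g)\Omega,\Omega\rangle=\langle \Pi_R(t)\Omega_0,\ \Pi_R(s)\Omega_0\rangle_{P_{k+1}},$$
and by the definition of the inner product on $P_{k+1}$ this is the evaluation in $P_0$ of the tangle capping the tree $t$ (with $R$ inserted at its vertices) against the reflected tree $s$ (with $R^\ast$ inserted), the two copies of $\Omega_0$ joining the roots; the two factors $(\text{loop parameter})^{-1/2}$ account for the scalar in Figure \ref{basicformula}. The main difficulty is bookkeeping rather than conceptual: one must track the single suppressed boundary point in the definition of $\Pi_R$ through both stabilizations and through the final capping, and confirm that the leftover strings of the two $\Omega_0$'s together with the cap produced by the inner product reconstruct exactly the \emph{roots-joined} closure asserted in the statement.
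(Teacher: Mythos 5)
Your proposal is correct and is essentially the argument the paper intends: the paper does not write out a proof of Theorem \ref{reps} (it calls the coefficient identity ``a simple exercise in the definitions''), and your write-up is exactly that exercise carried out --- isometry of the connecting maps $\Pi_R(f)$ from the unitarity condition checked on the elementary forests $f_i$, the completed direct limit with its unitary $F_n$-action, the normalized vector $\Omega=\tfrac{|}{\Omega_0}$, and the coefficient computed by stabilizing both copies of $\Omega$ to a common denominator. No gaps.
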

Once $R$ is fixed, the subscripts of ${\mathcal H}_R$ etc. will be suppressed.

 A word of warning: when calculating the moments $\langle \pi(g^k)\Omega, \Omega\rangle$ we will use Guba-Sapir-Belk composition method, however \emph{one cannot replace the vertices of the diagrams until all the cancellations have been made since 
these cancellations do not hold in the planar algebra}.

An interesting  example is the following, if $R$ is the (symmetric, self-adjoint) tensor with three indices taking values $1,2$ and $3$ :

 $$R_{i,j,k} =\begin{cases} \frac{1}{\sqrt 2} & \mbox{  if  } i,j,k \mbox{ all distinct  }\\
                                            0   &   \mbox{ otherwise }
                       \end{cases}  $$
                      
then  $R$ satisfies the unitarity condition and  $\langle \pi(g)\Omega, \Omega\rangle$ is the number of 3 edge-colourings of the planar graph given by the pair of
trees with the top connected to the bottom.  

 The non-vanishing of $\langle \pi(g)\Omega, \Omega\rangle$ for all $g\in F$ is
known to be equivalent to the four colour theorem!

 \section{Some spectral measures.}\label{section-examples}
Although our general results on spectral measure hold in all cases we will be especially interested in this paper
in the case where the planar algebra is that of a trivalent graph as in \cite{MPS}. Unfortunately this paper does not consider
the unitary structure but  the quantum $SO(3)$  category   can easily be obtained from the Temperley-Lieb planar algebra
whose unitarity goes back to  \cite{jo1}. Thus this planar algebra $(P_n)$ is
 a Hilbert planar algebra provided the loop parameter $d$ (to distinguish
from the $\delta$ of the Temperley-Lieb) is in the set $\{(4\cos^2 \pi/k)-1\; |\; k=6,7,8,9,\cdots\}\cup [3,\infty)$.

Thus we will fix the trivalent vertex in this category as the element $R\in P_3$ which is a
self-adjoint and rotationally invariant element  of $P_3$ and consider the representation $\pi$ of $F$ given by Theorem \ref{reps}
with its privileged unit vector $\Omega$.

In general if $u$ is a unitary operating on a Hilbert space with a vector $\psi$, the spectral theorem says there
is a Radon measure $\mu_\psi$ on the circle $\mathbb T$ such that, for any continuous function $f:\mathbb T\rightarrow \mathbb C$, $$
\langle f(u) \psi, \psi\rangle= \int_{\mathbb T} f(\theta) d\mu_\psi(\theta).$$
 In this section we determine the spectral measures for three elements of the Thompson group $F=F_2$ given
 by $\Omega$.  
  
 We recall some relations from \cite{MPS}: 
  \begin{formula} \label{mps}
  \mbox{   }\\
\noindent$     $\vpic {fig22} {.2in} $ = 0$, \vpic {fig21} {.2in} $ - $ \vpic {fig15} {.2in} $ = \frac{1}{d-1} {\bigg(} $ \vpic {fig16} {.2in} $ - $  \vpic {fig17} {.2in} $\bigg), $   \vpic {fig24} {.2in} $ = $ \vpic {fig23} {.07in} \mbox{ and } \vpic {fig25} {.28in} $ = t $  \vpic {fig26A} {.2in} ,
\5
\noindent where $t=(d-2)(d-1)^{-1}$, $d=\delta^2-1$ and $\delta = 2\cos\pi/n$  for $n=6, 7, 8, 9, \ldots$
\end{formula} 
In  our examples, we will  find a suitable representation of   the element in $F$, calculate the moments, and finally reconstruct the measure.  
\begin{example} We let $A$ be the usual generator of $F$ given by the pair of trees picture \\
$\mbox{            }  \qquad\qquad\qquad\quad\qquad\qquad\qquad  A=$\vpic {fig55} {0.75in} .\\
Using the Guba-Sapir-Belk multiplication or otherwise it is easy to see that  $A^n$ is given by the pair of trees picture\\ 
$\mbox{            } \qquad\quad\qquad \qquad\qquad\qquad    A^n=$\vpic {A-fig2} {1.25in} .\\
So by the last formula of \ref{mps} we see that $$ \langle \pi(A)^n \Omega, \Omega\rangle=t^{|n|}.$$

Now let  $\mu$ be a measure on $\mathbb T$ with moments $\mu_n$, i.e. $\int_{\mathbb T} e^{in\theta} d\mu=\mu_n$.
Then if $\sum_{n=-\infty}^\infty \mu_n e^{in\theta}$ converges to the smooth function $f$ then $\int_{\mathbb T} f(\theta)e^{-in\theta} d\mu=\mu_{-n}$
and since a Radon measure is determined by its values on the $e^{in\theta}$, $\mu$ is absolutely continuous w.r.t. Lebesgue 
measure and  $\int_{\mathbb T} e^{in\theta} d\mu=  \int_{0}^{2\pi}\bar f(\theta) e^{in\theta} d\theta/2\pi$. 

Thus we see that the spectral measure of $\pi(A)$ is absolutely continuous and its density is given by the Poisson kernel
$$\sum_{n\in \mathbb Z} t^{|n|}e^{in\theta}=\frac{1-t^2}{1-2t\cos\theta +t^2}.$$
\end{example}
 \begin{example}\label{ese-1}
 Consider the following element of $F$\\
 $\mbox{                       } \qquad \qquad \qquad \qquad \; \quad  N =  $\vpic {fig7} {1.75in}
 \\
By applying an inverse move of type II, we may express $N$ as\\
 $\mbox{                       } \qquad \qquad \qquad \qquad \qquad \quad N = $\vpic {fig10BB} {2in} 
\\
In view of Theorem \ref{BM-description-F}, this representation of $N$ is particularly convenient in calculating its powers because the consecutive occurrences of $S$ and $S^{-1}$ cancel out 
\\
$\mbox{                    }$\qquad $N^k =$ \vpic {fig30A} {1.9in} $=$ \vpic {fig27bisA} {1.9in}  $\forall k\geq 1$
\\
We observe that the last diagram is reduced.\\
We are now in a position to find the moments of $N$ in our representations.
For $k\geq 1$ we have \\
$\mbox{                       }    \qquad \langle\pi(N)^k\Omega,\Omega\rangle  \; \Omega  = t^2$ \vpic {fig28A} {2in} $=t^2\langle \pi(A)^{-k-1}\Omega,\Omega\rangle  \; \Omega$ ,
\\
 We already know  the moments of the generator $A$ so in some sense we are done. \\
But we would like to reinterpret the calculation of the moments of $A$ in a way that will work in more generality.

If we crop the  above diagram under the red dashed line we get the following elements in the planar algebra
\\
$\mbox{                       }     \qquad\qquad\qquad\qquad v= $\vpic {fig29BB} {.2in} , \quad $\tilde E_N^{k+1} v=$  \vpic {fig31A} {2in} .
\\
In the planar algebra, we may see  $\tilde E_N= $\vpic {fig9BB} {.25in} acting as 
 an endomorphism
 of the $1$-dimensional vector space spanned by $v$,
the action being  $\tilde E_A v= tv$.
Thus we can interpret $\langle \pi(A)^{-k-1}\Omega,\Omega\rangle$ as the inner product 
$$
\langle \tilde E_N^{k+1}v,v\rangle = \langle t^{k+1}v,v\rangle = t^{k+1}\langle v,v\rangle = t^{k+1}
$$
Therefore, we have $\langle\pi(N)^k\Omega,\Omega\rangle = t^{|k|+3}$ for all $k\in\mathbb{Z}\setminus \{0\}$ and $\langle \pi(N)^0\Omega,\Omega\rangle = 1$. 
Then, the spectral measure of $g$ is $d\mu=fd\theta/(2\pi)$, where 
$f$ is the function
\begin{align*}
f(\theta) & = \sum_{n=-\infty}^\infty \mu_n e^{in\theta} =\frac{2t^3-2t^4\cos\theta}{1-2t\cos\theta+t^2}-2t^3+1
\end{align*}

 \end{example}

 \begin{example}\label{ese-2}
 We now consider the element $X\in F$ shown in 
 \eqref{elementX}, which we may see as the composition of     three strand diagrams\\
   $\mbox{                       }    \; \; \qquad\qquad\qquad\qquad \qquad\qquad \; \; $\vpic {fig20CC} {1.75in}
\\
By using this representations of $X$, its powers assume the following form\\
\begin{eqnarray}	\label{powers-2}
   \qquad\quad\; \qquad\qquad\qquad \qquad\qquad\vpic {fig33Abis} {1.5in} \qquad\qquad\qquad\qquad	k\geq 1\; .
\end{eqnarray}
If we crop the figure under the red dashed line, we get these two elements of the planar algebra:\\
   $\mbox{                       }    \qquad\qquad \qquad\qquad \quad \xi = $ \vpic {fig36A} {0.75in}, $\eta=$ \vpic {fig34A} {1in} 
\\
We may interpret  $\tilde E_X:=$\vpic {fig19A} {.4in} as a linear map of 
the three dimensional vector space $V={\rm span}\{v_1,v_2,v_3\}$, where  $v_1 = $\vpic {fig37} {.2in} , $v_2 = $\vpic {fig38} {.2in} , $v_3 = $\vpic {fig39} {.2in} . It follows that $\eta=\tilde E_X^k\xi$. 
Let us diagonalise the linear map $\tilde E_X$ and find a formula for all the moments of $X$.
With respect to this basis, the map $\tilde E_X$ is represented by the following matrix\\
$$
\tilde E_X = \left( 
\begin{array}{ccc}
t & 0 & 1\\
0 & 1 & (d-1)^{-1}\\
0 & 0 & (1-d)^{-1}\\
\end{array}
\right)\; .
$$
which has eigenvalues and eigenvectors
\begin{align*}
\lambda_1 & = t  &  w_1 &  = v_1\\
\lambda_2 & = 1 &  w_2 & = v_2\\
\lambda_3 & = (1-d)^{-1}  & w_3 & = -v_1-v_2/d+v_3
\end{align*}
With respect to the basis $\{w_1, w_2, w_3\}$, the linear map $\tilde E_X$ is represented by the diagonal matrix ${\rm diag}(t,1,(1-d)^{-1})$.
Simple computations show that 
$\xi =tw_1+d^{-1}w_2+(1-d)^{-1}w_3$ and thus, 
for $k\geq 1$, we have $\eta=\tilde E_X^{k}\xi
=t^{k+1}w_1+d^{-1}w_2+(1-d)^{-k-1}w_3$.
\\
As in the previous example, we interpret \eqref{powers-2} as 
$\langle 	\pi(X)^{k}\Omega,\Omega\rangle  
=  \langle \tilde E_X^{k}\xi,\xi\rangle$. Thus the moments are\\
$    
\mbox{                                              }  \langle \pi(X)^{k}\Omega,\Omega\rangle  \; \Omega  \; = t^{k+1} $\vpic {fig42} {0.2in} $+\; \frac{1}{d} $ \vpic {fig43} {0.24in} $-\;  \frac{1}{(1-d)^{k+1}}$ \vpic {fig42} {0.24in}  $-\;  \frac{1}{d(1-d)^{k+1}}$ \vpic {fig43} {0.24in} 
 $$
 = \left(t^{k+2}+\; \frac{1}{d} -\;  \frac{t}{(1-d)^{k+1}}-\;  \frac{1}{d(1-d)^{k+1}}\right) \; \Omega
 $$
Now there are two cases to handle depending on whether $d=2$ (which entails that $t=0$) or not. In the former case we have
 $\langle \pi(X)^{k}\Omega,\Omega\rangle =(1+(-1)^k)/2$. This means that the measure is actually $\mu=(\delta_1+\delta_{-1})/d$, where $\delta_{a}$ denotes the Dirac measure with support $\{a\}$.
In the latter case the spectral measure is given by the sum of $\delta_1/2$ and $fd\theta/(2\pi)$, where 
\begin{align*}
f(\theta) 
& = \frac{2t^2-2t^3\cos\theta}{1+t^2-2t\cos\theta}-\frac{(t+d^{-1})(2-2d-2\cos\theta)}{(1-d)^2-2(1-d)\cos\theta+1} 
+\frac{2td^2-2td+1-d^2+2d}{d(1-d)}
\end{align*}
\end{example}
The aim of the next sections is to generalize the method used in these examples to any element of the Thompson group and to a broader class of representations.

 \section{The essential part of an element of $F_n$.}
 Motivated by the calculations of the previous section we will give a more or less canonical decomposition
 of an element $g\in F_n$ which will allow us to control the ``width'' of $g^k$ for large $k$.
 
 As in \cite[Section 3]{BM} we will work with the groupoid of equivalence classes of   $(h,k)$-strand diagrams. 
Given an $(h,k)$-strand diagram $f$ we denote by $[f]$ the corresponding element of the groupoid.

\begin{theorem}\label{lemma-essential}
Every $g$ in $F_n$ can be written in the form
  \\
   $\mbox{                       }    \; \; \quad\qquad\qquad\qquad \qquad\qquad \quad g=$\vpic {fig1B} {1.25in}
\\
where $S$ is a $(1, m)$-strand diagram, $E_g$ is a reduced $(m,m)$-strand diagram such that 
if we concatenate two copies of $E_g$ the resulting diagram is already reduced.
\end{theorem}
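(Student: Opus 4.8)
The plan is to realise the desired factorisation through the closure of $g$ and the reduction of annular strand diagrams introduced above, following the strategy of Belk and Matucci \cite{BM}. First I would represent $g$ by its reduced $(1,1)$-strand diagram and form its closure $\widehat{g}$, an annular strand diagram in the sense of the discussion preceding Figure \ref{closure}. I would then apply the reduction moves of types I, II and III (Figure \ref{reduction-fig}) repeatedly; since each move strictly decreases either the number of vertices or the number of free concentric loops, the process terminates in a \emph{reduced} annular strand diagram $A$. The point to record from \cite{BM} is that annular reduction preserves the conjugacy class, so any element of $F_n$ whose closure reduces to $A$ is conjugate to $g$.

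The essential part is obtained by cutting $A$ open. I would choose a radial ray from the inner to the outer boundary of the annulus meeting no vertex of $A$ (such a ray exists, $A$ having finitely many vertices), let $m$ be the number of points in which this ray meets the strands of $A$, and let $E_g$ be the $(m,m)$-strand diagram obtained by cutting $A$ along the ray. Because every directed cycle in an annular strand diagram winds around the central hole and hence crosses the ray, the cut diagram $E_g$ is acyclic, i.e. a genuine $(m,m)$-strand diagram, and re-gluing its $m$ sinks to its $m$ sources recovers $A$.

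The crucial step is to verify that $E_g$ is reduced and that $E_g E_g$ is reduced. Any type I or II reduction available inside a single copy of $E_g$ would involve two adjacent interior vertices and would therefore be a reduction of $A$ supported away from the ray, contradicting the reducedness of $A$; note that the univalent sources and sinks created by the cut cannot take part in a reduction. In the concatenation $E_g E_g$ the only new adjacencies occur along the internal interface where the sinks of the first copy meet the sources of the second, and gluing there reconstructs $A$ across the ray, so a type I or II reduction at that interface would again be a reduction of $A$, now crossing the seam, which is impossible. Hence $E_g E_g$ is reduced, and by the same local argument $E_g^k$ is reduced for every $k\geq 1$, which is exactly the ``stability'' property that controls the width of $g^k$.

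Finally, since the closure of $E_g$ equals $A$, which is annular-equivalent to $\widehat{g}$, the element $E_g$ of the groupoid with source and target $m$ (Theorem \ref{BM-description-F}) is conjugate to $g$; concretely there is a groupoid morphism $S\colon 1 \to m$, i.e. a $(1,m)$-strand diagram, with $g = S\, E_g\, \overline{S}$, where $\overline{S}$ is the reflection of $S$. This is precisely the asserted form. I expect the main obstacle to be the third step together with the bookkeeping in the last one: one must argue carefully that reducedness of the \emph{annular} diagram $A$ is equivalent to self-reducedness of the \emph{linear} cut diagram $E_g$, matching the planar reduction types I and II across the seam with the annular moves and checking that the genericity of the ray makes $m$ and $E_g$ well defined up to the cyclic choice of ray (which merely replaces $S$ by another conjugator), and that the plain conjugacy of $E_g$ and $g$ can be upgraded to the explicit sandwich $g = S\, E_g\, \overline{S}$ with an $S$ obtained by tracking the reductions, as in the explicit appearances of $S$ and $S^{-1}$ in the examples of Section \ref{section-examples}.
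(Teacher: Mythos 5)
Your proposal is correct and follows essentially the same route as the paper: close $g$, reduce the annular diagram, and use the fact that each annular reduction move is implemented by a groupoid conjugation (Proposition \ref{prop-red-conj}) to extract $S$ and $E_g$. The only real difference is the final step: where you argue directly that a cancellation in $E_g E_g$ would descend to a cancellation of the reduced annular diagram across the seam, the paper lifts to the universal cover of the annulus, which handles all powers $E_g^p$ at once --- a point worth noting since for $p\geq 3$ the edges of a would-be cancellation can wrap around the annulus several times, so your ``same local argument'' for higher powers needs exactly this extra care (though the theorem as stated only requires $p=2$).
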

The Example \ref{ese-1} shows that, in general, $S$ does  not need to be a tree, but just a strand diagram.\\
As in  \cite[Section 4]{BM} one can define three types of cycles in annular strand diagrams, that is  free loops (directed cycle with no vertices),  split loops (a directed cycle with splits, but no merges), and merge loops (a directed cycle with merges, but no splits).
Before proving the previous theorem, we state a couple of results whose proofs are straightforward extensions of those in \cite{BM}. 
\begin{proposition}\label{prop-red-conj}
{\rm (cf. \cite[Proposition 3.2, p. 249]{BM})}
Let $f$ be a $(p, p)$-diagram, let $f'$ be its closure,
and let $g'$ be a closed diagram obtained by applying a reduction to $f'$. Then there exist a $(q, q)$-diagram $g$ whose closure is $g'$, and an $(p,q)$-strand diagram $h$ such that $[f]=[h][g][h]^{-1}$.
\end{proposition}
\begin{proposition}\label{prop-structure}
{\rm (cf. \cite[Proposition 4.1, p. 252]{BM})}
 Let $f$ be any reduced closed strand diagram. Then
\begin{enumerate}
\item Every component of $f$ has at least one directed cycle.
\item Every directed cycle in $f$ is either a free loop,  
 a split loop,  
 or a merge loop. 
\item Any two directed cycles in $f$ are disjoint, and no directed cycle intersects itself.
\end{enumerate}
\end{proposition}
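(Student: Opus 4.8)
The plan is to follow the proof of \cite[Proposition 4.1]{BM} essentially verbatim, the only changes being bookkeeping for the $n$-ary valence: in a closed strand diagram every internal vertex is an $(n+1)$-valent split (in-degree $1$, out-degree $n$) or merge (in-degree $n$, out-degree $1$), and the relevant reduction moves of Figure \ref{reduction-fig} now involve $n$ parallel edges (move II) and $n$ concentric circles (move III). I would establish the three assertions in turn, each resting only on these local degree constraints, on planarity of the embedding in the annulus, and on the hypothesis that $f$ is reduced.

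For assertion (1) the key point is that closing a strand diagram destroys all sources and sinks: every vertex of $f$ is a split or a merge, hence has out-degree $\geq 1$ and in-degree $\geq 1$. Fix a component and a vertex $v$ in it (if the component has no vertices it is already a free loop, which is itself a directed cycle). Following outgoing edges forward from $v$ never terminates, since every vertex offers an outgoing edge, and as the component is finite some vertex must be revisited; the portion of the walk between two successive visits is a directed cycle lying in that component. That this cycle winds counterclockwise is part of the definition of an annular strand diagram, so nothing further is needed here.

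Assertion (2) is the crux. I would argue by contradiction: suppose a directed cycle $C$ contains at least one split and at least one merge. Reading the vertices of $C$ in cyclic order, the splits and merges cannot be completely segregated, so somewhere a merge $m$ is immediately followed along $C$ by a split $s$. Because $m$ has out-degree $1$ and $s$ has in-degree $1$, the single edge of $C$ joining them is simultaneously the unique output edge of $m$ and the unique input edge of $s$; that is, $m$, $s$ and the joining edge form a reducible configuration (a \emph{dipole}). Such a configuration can be removed by a reduction move of Figure \ref{reduction-fig} — the merge--split elimination, which replaces $m$, $s$ and the connecting edge by $n$ parallel strands joining the $n$ inputs of $m$ to the $n$ outputs of $s$ — contradicting the assumption that $f$ is reduced. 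Hence $C$ cannot contain both a split and a merge, so it is a free loop, a split loop, or a merge loop. The point requiring the most care is precisely this matching of the cyclic merge-then-split adjacency to an admissible reduction in the $n$-ary setting; once that is pinned down the argument is immediate, and I expect this to be the main obstacle.

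Assertion (3) follows from the same local degree constraints together with (2). In a split loop every vertex has in-degree $1$, hence a unique predecessor, so tracing the loop backwards is deterministic; this forbids the loop from meeting itself, since a repeated vertex would, by determinism of the backward trace, make the loop a repetition of a shorter directed cycle, so the underlying directed cycle is self-avoiding. It likewise forbids two distinct split loops from sharing a vertex, since a common vertex would share its unique predecessor and the two loops would then have identical backward traces and coincide. The dual argument, using uniqueness of successors (out-degree $1$), handles merge loops. A split loop and a merge loop cannot share a vertex at all, since by (2) membership in a split loop forces the vertex to be a split while membership in a merge loop forces it to be a merge; and a shared \emph{edge} between any two cycles would force both of its endpoints to be shared, reducing to the vertex case. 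Finally a free loop has no vertices and is an embedded circle occupying its own component, so by planarity of the embedding it is disjoint from every other cycle. This exhausts all cases and completes the proof.
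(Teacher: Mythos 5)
Your proof is correct and matches the paper's intent exactly: the paper offers no written argument for this proposition, stating only that the proof is a ``straightforward extension'' of \cite[Proposition 4.1]{BM}, and your writeup is precisely that extension, with the degree bookkeeping (splits of in-degree $1$/out-degree $n$, merges of in-degree $n$/out-degree $1$) and the merge-then-split dipole matched correctly to the type II move with $n$ parallel edges. In particular your treatment of the crux in assertion (2) --- that a cyclic word in splits and merges containing both letters must contain a merge immediately followed by a split, whose connecting edge is forced to be the merge's unique output and the split's unique input --- is exactly the Belk--Matucci argument in the $n$-ary setting, so nothing further is needed.
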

\begin{proof}[Proof of Theorem \ref{lemma-essential}]
Let $g'$ be the closure of $g\in F_n$.
There exists a finite number of reduction moves of type I, II, and III, say $k$, that yield a reduced annular strand diagram.
By Proposition \ref{prop-red-conj} 
 there is a family of elements of the groupoid $g_0=g, g_1, \ldots , g_k$ and $h_1, \ldots , h_k$ such that, for all  $i=0, \ldots , k-1$, we have $[g_i]=[h_{i+1}][g_{i+1}][h_{i+1}]^{-1}$, 
the closure of $g_{i+1}$ is obtained from the closure of $g_i$ by applying a reduction move,
the closure of $g_k$ is the reduced annular strand diagram corresponding to $g'$.
\\
Therefore, we have
$$
[g]=[h_1][g_1][h_1]^{-1}=[h_1][h_2][g_2][h_2]^{-1}[h_1]^{-1}=\ldots =[h_1]\cdots [h_k][g_k][h_k]^{-1}\cdots [h_1]^{-1}
$$
We set $[S]:=[h_1]\cdots [h_k]$ and $[E_g]:=[g_k]$.

That no cancellation can occur taking powers of $E_g$ is clear. Take 
the universal cover $\pi :\tilde A\rightarrow A $ of an annulus $A$ containing $E_g$. Then $\pi^{-1}(E_g)$ consists of an infinitely long chain of copies of $E_g$. If there were any cancellation between vertices in $E_g^p$, $\pi$ would send them to
a cancellation (perhaps wrapping round $A$ several times) in $E_g$ inside $A$. But $E_g$ is supposed to be reduced in the annulus.

\end{proof}
The following lemma is interesting in itself and provides the last tool  we need for the proof of the main result of this section.
\begin{lemma} \label{no-cancellations}
Let $A$ and $B$ be composable reduced strand diagrams. 
There is a sequence of cancellations in $AB$ between vertices in $A$ and vertices in $B$ so that
the result $\overline{AB}$ of performing these cancellations admits no more cancellations.
\end{lemma}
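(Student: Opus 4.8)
The plan is to label each vertex of the composite $AB$ according to whether it comes from $A$ or from $B$, and to perform the reductions so that this labelling lets me show, by induction, that every reducible configuration sits exactly on the interface between the two pieces. I begin with two easy structural remarks. Because $A$ and $B$ are strand diagrams their underlying graphs are acyclic, and joining the sinks of the top copy to the sources of the bottom one keeps every edge oriented downward, so $AB$ is again acyclic; in particular it is a planar, non-annular diagram with no directed cycle and hence no free loop, so the only reductions that can ever occur are those of type I (a merge directly above a split, joined by a single edge) and of type II (a split directly above a merge, joined by $n$ parallel edges).

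Next I set up the invariant. At each stage I write the current diagram as $A'$ stacked on $B'$, where $A'$ and $B'$ denote the induced subdiagrams on the vertices inherited from $A$ and from $B$ respectively. The property I propagate is that both $A'$ and $B'$ are reduced. Granting this, neither a type I nor a type II reducible pair (dipole) can have both of its vertices in $A'$, nor both in $B'$, since such a pair would already be a dipole of the reduced diagram $A'$ or $B'$; consequently every remaining dipole has its upper vertex in $A'$ and its lower vertex in $B'$, that is, it straddles the interface. At the start $A'=A$ and $B'=B$ are reduced by hypothesis, so the invariant holds.

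I then verify that an interface cancellation preserves the invariant. Cancelling a type II dipole, say a split $s$ of $A'$ joined by $n$ edges to a merge $m$ of $B'$, deletes $s$ and $m$ and reconnects the input edge of $s$ to the output edge of $m$ by a single new edge crossing the interface; the type I case is dual and produces $n$ new crossing edges. In either case the upper endpoints of the edges involved are left untouched, so no two vertices lying wholly inside $A'$ (or wholly inside $B'$) become newly adjacent. Hence removing the single deleted vertex from $A'$ yields a diagram with the same internal adjacencies, in which any dipole would already have been a dipole of $A'$, contradicting its reducedness; the same argument applies to $B'$. Thus the updated $A'$ and $B'$ remain reduced, and every freshly created adjacency crosses the interface, so the invariant persists.

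Finally I run the procedure greedily: as long as a dipole exists the invariant forces it to straddle the interface, so the cancellation is between a vertex of $A$ and a vertex of $B$, and each such move deletes exactly one vertex from each side, strictly decreasing the finite total number of vertices. The process therefore terminates at a diagram $\overline{AB}$ that has no remaining dipole, i.e.\ admits no further cancellation, which is the assertion of the lemma. I expect the preservation step to be the main obstacle: the careful bookkeeping showing that, for each reduction type, deleting the interface pair creates no new dipole internal to either piece and leaves all new adjacencies across the interface is where the real work lies, whereas acyclicity, the absence of free loops, and termination are immediate.
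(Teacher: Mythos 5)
Your proof is correct and rests on the same key observation as the paper's: an interface cancellation only creates edges running from the $A$-side to the $B$-side, so no dipole internal to either factor can ever be created. The paper packages this as a contradiction argument (take a maximal sequence of interface cancellations and track where the output edge of a putative internal dipole must point), while you run a greedy reduction maintaining the invariant that both induced halves stay reduced --- essentially the same argument in a different logical wrapper.
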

\begin{proof} It suffices to take a sequence $\mathcal S$ of cancellations between vertices in $A$ and $B$ that is maximal.
By contradiction suppose that, after the cancellations in $\mathcal S$ have been performed, there is a cancellation between
vertices $v$ and $w$, which must be both in $A$ or both in $B$ by the maximality of $\mathcal S$.

 First suppose this cancellation is a move of type II. Wolog suppose $v$ is the merge and that both $v$ and $w$ are in $A$. Then in
$AB$ the edge emerging from $v$ must connect $v$ to a vertex $u$ of $A$ other than $w$, otherwise $A$ itself would admit cancellations.
($v$ can never be connected to a vertex of $B$ as we will see.)
But since $u$ does not appear in $\overline{AB}$, there is a cancellation in $\mathcal S$ involving $u$. This cancellation $s$ must be with
a vertex of $B$. So whether $s$ is of type I or II,  after performing $s$ there is an edge between $v$ and a vertex in $B$  (possibly a sink). But no subsequent cancellations
can result in an edge connecting $v$  to a vertex outside $B$. Thus the edge in $AB$ connecting $v$ to $w$ is impossible, a contradiction.

Now suppose the cancellation between $v$ and $w$ is type I and let $v$ be the split. One of the edges emanating from $v$ must have
been originally connected to some other vertex $u$ of $A$. Now argue as before.
\end{proof} 
\begin{theorem}\label{prop-structure-powers}
Let $g,h,\tilde h$ be three elements of $F_n$. Then, there exists a $k\in\mathbb{N}$ and two strand diagrams $S_+$, $S_-$, such that for all $p\geq k$ the following diagram is reduced
  \\
   $\mbox{                       }    \; \; \quad\qquad\qquad\qquad \qquad\qquad \quad h g^p \tilde h=$\vpic {fig3C} {1.75in}
\\
where $\tilde E_g$ is a reduced $(m,m)$-strand diagram.  
\end{theorem}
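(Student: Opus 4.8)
The plan is to reduce everything to the essential part of $g$ furnished by Theorem~\ref{lemma-essential} and then to control, by means of Lemma~\ref{no-cancellations}, how far the cancellations forced by $h$ and $\tilde h$ can eat into the power $E_g^p$. I would work throughout in the groupoid of Theorem~\ref{BM-description-F}. By Theorem~\ref{lemma-essential} we may write $[g]=[S][E_g][S]^{-1}$, where $S$ is a $(1,m)$-strand diagram and $E_g$ is a reduced $(m,m)$-strand diagram all of whose powers $E_g^p$ are reduced. Since the inverse in the groupoid is reflection about a horizontal line, $[S]^{-1}[S]=[\mathrm{id}_m]$, the identity at $m$, so the inner factors telescope and $[g]^p=[S][E_g]^p[S]^{-1}$. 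Hence
$$[h\,g^p\,\tilde h]=[h][S]\,[E_g]^p\,[S]^{-1}[\tilde h].$$
Letting $S_+^0$ be a reduced $(1,m)$-strand diagram representing $[h][S]$ and $S_-^0$ a reduced $(m,1)$-strand diagram representing $[S]^{-1}[\tilde h]$, the theorem becomes the assertion that, for $p$ large, the concatenation $S_+^0\,E_g^p\,S_-^0$ becomes reduced after absorbing a bounded (independent of $p$) number of copies of $E_g$ from each end into reduced caps $S_+$ and $S_-$.

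The heart of the argument, and the step I expect to be the main obstacle, is a uniform bound on the depth to which the left and right cancellations penetrate the stack $E_g^p$. Since $E_g^p$ and $S_+^0$ are both reduced, Lemma~\ref{no-cancellations} applies to $A=S_+^0$, $B=E_g^p$: a maximal sequence $\mathcal S_+$ of cancellations between $A$ and $B$ renders $S_+^0 E_g^p$ reduced, and since each cancellation pairs one vertex of $A$ with one vertex of $B$, we get $|\mathcal S_+|\le \#\{\text{vertices of }S_+^0\}=:k_+$, a constant. I would then argue that the affected vertices of $B$ lie in the top $k_+$ copies of $E_g$. Before any cancellation only the topmost copy is adjacent to $A$, because $B=E_g^p$ is reduced and so has no internal cancellations; a vertex in the $j$-th copy can therefore be consumed only after the cancellation frontier has advanced through copies $1,\dots,j-1$, and a single local move of type I or II advances the frontier by at most one copy. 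Thus after $|\mathcal S_+|\le k_+$ cancellations the frontier has reached at most copy $k_+$. Symmetrically, the cancellations between $E_g^p$ and $S_-^0$ are confined to the bottom $k_-$ copies, where $k_-$ is the number of vertices of $S_-^0$.

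Finally I would set $k:=k_++k_-+1$ and take $p\ge k$. Then the left cancellation zone (top $k_+$ copies) and the right cancellation zone (bottom $k_-$ copies) are separated by at least one untouched copy of $E_g$, so the two maximal reductions occur in disjoint regions and create no new adjacencies across the middle; performing both yields
$$h\,g^p\,\tilde h=S_+\,\tilde E_g^{\,p-k_+-k_-}\,S_-,\qquad S_+:=\overline{S_+^0 E_g^{k_+}},\quad S_-:=\overline{E_g^{k_-} S_-^0},\quad \tilde E_g:=E_g.$$
This diagram is reduced: the middle factor is reduced because $E_g$ has reduced powers; the $S_+$/middle and middle/$S_-$ interfaces admit no cancellation, for otherwise the maximality of $\mathcal S_+$ (resp.\ $\mathcal S_-$) would be violated; and $S_+$ cannot cancel against $S_-$ since for $p\ge k$ they are not adjacent. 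The caps $S_+,S_-$ and the integer $k$ depend only on $g,h,\tilde h$, and $\tilde E_g$ is the reduced $(m,m)$-diagram of Theorem~\ref{lemma-essential}, as required.
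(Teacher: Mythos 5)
Your overall strategy coincides with the paper's: write $[g]=[S][E_g][S]^{-1}$ via Theorem \ref{lemma-essential}, telescope to get $[hg^p\tilde h]=[hS][E_g]^p[S^{-1}\tilde h]$, and use Lemma \ref{no-cancellations} together with the fact that the reduced forms of $hS$ and $S^{-1}\tilde h$ have a fixed, finite number of vertices to show that the cancellations at the two ends stabilise for $p$ large. Up to that point you are essentially following the paper (your ``frontier'' bookkeeping is a little shakier than needed, since a vertex deep inside $E_g^p$ can be joined to a vertex of $S_+^0$ by a single edge running through vertex-free portions of many copies of $E_g$, but the bound $|\mathcal S_\pm|\le k_\pm$ on the number of cancelled vertices is all that is really used).

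The genuine gap is in your last step, where you declare that $S_+$ cannot cancel against $S_-$ ``since for $p\ge k$ they are not adjacent'' and consequently set $\tilde E_g:=E_g$. Adjacency for a reduction move means being joined by an edge (one edge for type II, $n$ parallel edges for type I), not being vertically consecutive in the stacking, and an edge can pass through arbitrarily many copies of $E_g$: the reduced annular diagram of $g$ may contain free loops (Proposition \ref{prop-structure}), which upon cutting become vertex-free vertical strands of $E_g$, so a merge at the bottom of $S_+'$ can be joined by one straight line, running through all $p$ copies of $E_g$, to a split at the top of $S_-'$. The paper's proof explicitly performs these residual reductions: such a type II move introduces $n-1$ new parallel edges into each copy of $E_g$, and dually a type I move between a split in $S_+'$ and a merge in $S_-'$ deletes $n-1$ parallel vertex-free strands from each copy. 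This is precisely why the conclusion involves $\tilde E_g$, which is equal to $E_g$ only \emph{up to parallel edges}, rather than $E_g$ itself. As written, your argument both overlooks these remaining cancellations (so the final diagram need not be reduced) and asserts the too-strong identification $\tilde E_g=E_g$.
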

\begin{proof}
By using the representation of $g$ in Lemma \ref{lemma-essential}, we see that the consecutive  occurrences of $S^{-1}$ and $S$ cancel out 
 for all $p\in\mathbb{N}$. Therefore, 
 we get
  \\
   $\mbox{                       }    \; \;  \qquad  \quad g^p=$\vpic {fig2C} {1.35in} \quad and
  $\mbox{                       } \quad  h g^p \tilde h =  $\vpic {fig45D} {1.5in} 
\\
Now let $\hat S$ and $\check S$ be the reduced strand diagrams associated with $hS$ and $S^{-1}\tilde h$, respectively.
Consider the pairs $A_p:=\hat S$, $B_p:=(E_g)^p$ and apply Lemma \ref{no-cancellations} to this family of pairs. Since $A_p$ 
is always the same diagram (and has a 
finite
 number of vertices), for $p$ greater than a certain $k_1\in \mathbb N$ there are no more cancellations. Therefore, we have $\hat S (E_g)^p=S_+' (E_g)^{p-k_1+1}$ for all $p\geq k_1$, where  $S_+'$ is a reduced strand diagram.
Similarly, if we consider the pairs $A_p:=(E_g)^p$, $B_p:=\check S$ we  see that, for $p$ greater than a certain $k_2\in \mathbb N$, there are no more cancellations and $(E_g)^p\check S= (E_g)^{p-k_2+1}S_-'$  where  $S_-'$ is a reduced strand diagram.
 This means that for $p\geq k_1+k_2$, we have $h g^p \tilde h =S_+' (E_g)^{p-k_1-k_2+2}S_-'$.
 At this stage 
  some additional reductions might be possible. 
 By construction these reductions can only occur  between vertices of $S_+'$ and $S_-'$.
We observe that 
if  a merge in $S_+'$ 
is connected  by a straight line to a split in $S_-'$, 
a reduction move of type II occurs and  $n-1$ new  parallel edges appear in $E_g$.\\
Similarly 
if a split in $S_+'$ 
 is connected by   $n$ parallel edges to a merge in $S_-'$
 , we may cancel $n-1$ parallel edges of $E_g$ 
thanks to a move of type I. Once these reductions are performed, one easily gets the desired representation of $h g^p \tilde h$.
\end{proof}
We 
 observe that the natural number $k$ and the strand diagrams $S_\pm$, $\tilde E_g$ depend on both $h$ and $\tilde h$. Moreover, $\tilde E_g$ is equal to $E_g$ up to 
 parallel edges. 
When $h=\tilde h=1$,
we 
 call the element $\tilde E_g$ 
  the \emph{essential part of $g$}.
\\

 \section{Absolute continuity of the spectral measure.}
We keep the notation of the previous sections.
Given an element $g\in F_n$, a vector $\psi\in\mathcal{H}$, there is a linear functional $I_{g;\psi}$ which maps  $f\in C(\mathbb T)$ to $\langle f(\pi(g))\psi,\psi\rangle$. As already mentioned, this functional is determined by the so-called spectral measure, which we denote by $\mu_{g;\psi}$, or simply by $\mu_{\psi}$ if the element $g$ is clear from the context.
The aim of this section is to classify the spectral measures which arise from a certain family of representations of $F_n$.  
 \begin{theorem}\label{main-thm}
 With the notations of the previous sections, suppose that $\Phi(k)$ is a finite dimensional Hilbert space for all $k\in\mathbb N$ and let $\Omega$ be a vector in $S_\iota$, that is $\Omega$ is of the form $(\iota, x)$, where $\iota$ is   the tree with one vertex and no edges and $x\in\Phi(1)$. Let $g\in F_n$ and $\psi\in [\pi(F)\Omega]^=$, 
 with $\{g_i\}_{i\in I}\subset F_n$, $\alpha_i\in\mathbb C$. 
 Then, 
 the spectral measure associated with $g$ and $\psi$
 can be decomposed as
 $\mu_{g;\psi}=\mu_1+\mu_2$,
 where $\mu_1$ is a measure which is absolutely continuous with respect to the Lebesgue
measure  and $\mu_2$ is a  pure point measure with finite support.
 \end{theorem}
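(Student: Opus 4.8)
The plan is to reduce everything to the asymptotic behaviour of the moment sequence $m_k:=\langle \pi(g)^k\psi,\psi\rangle$ and to show that, for $|k|$ large, this sequence is a finite exponential polynomial whose frequencies are the eigenvalues of fixed finite-dimensional linear maps. Writing $\psi=\sum_{i\in I}\alpha_i\pi(g_i)\Omega$ (I treat finite $I$ first and pass to the closure at the end), unitarity of $\pi$ gives
\[
m_k=\sum_{i,j\in I}\overline{\alpha_j}\,\alpha_i\,\langle \pi(g_j^{-1}g^k g_i)\Omega,\Omega\rangle .
\]
To each summand I apply Theorem \ref{prop-structure-powers} with $h=g_j^{-1}$ and $\tilde h=g_i$: there is a $k_{ij}$ so that for all $k\geq k_{ij}$ the element $g_j^{-1}g^k g_i$ is represented by a reduced strand diagram of the form $S_+^{ij}\,(\tilde E_g)^{k-c_{ij}}\,S_-^{ij}$, where $\tilde E_g$ is the essential part of $g$ decorated with parallel edges, and $m_{-k}=\overline{m_k}$.

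By Theorem \ref{reps} the coefficient $\langle \pi(g_j^{-1}g^k g_i)\Omega,\Omega\rangle$ is the evaluation in $P_0$ of this diagram with $R,R^*$ inserted and the roots tied together. Cropping above and below the central tower $(\tilde E_g)^{k-c_{ij}}$, exactly as in Examples \ref{ese-1} and \ref{ese-2}, the essential part $\tilde E_g$ becomes a linear endomorphism $T_{ij}$ of a finite-dimensional space $V_{ij}\subseteq\Phi(m_{ij})$ — finite-dimensional precisely by the hypothesis that every $\Phi(k)$ is finite-dimensional — and the coefficient equals $\langle T_{ij}^{\,k-c_{ij}}\xi_{ij},\eta_{ij}\rangle$ for fixed vectors $\xi_{ij},\eta_{ij}$ arising from $S_\pm^{ij}$ and the vacuum caps. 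Passing to the Jordan form of each $T_{ij}$, there is a $k_0$ and a finite set $\lambda_1,\dots,\lambda_r$ (the union of the spectra of the $T_{ij}$) with
\[
m_k=\sum_{\ell=1}^r p_\ell(k)\,\lambda_\ell^{\,k}\qquad(k\geq k_0),
\]
the $p_\ell$ being polynomials. Since $|m_k|\leq\|\psi\|^2$ for every $k$, boundedness of this exponential polynomial as $k\to+\infty$ forces $|\lambda_\ell|\leq 1$ for all $\ell$, and forces $p_\ell$ to be a constant $c_\ell$ whenever $|\lambda_\ell|=1$.

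Now I reconstruct $\mu:=\mu_{g;\psi}$ from its Fourier coefficients $\hat\mu(k)=m_k$, as in the first worked example (the generator $A$). For the unit-modulus frequencies write $\lambda_\ell=e^{i\theta_\ell}$; the term $c_\ell\,e^{ik\theta_\ell}$ is exactly the moment sequence of the point mass $c_\ell\,\delta_{\lambda_\ell}$, so I set $\mu_2:=\sum_{|\lambda_\ell|=1}c_\ell\,\delta_{\lambda_\ell}$, a pure point measure with finite support. Putting $\mu_1:=\mu-\mu_2$, its coefficients $\hat\mu_1(k)=m_k-\sum_{|\lambda_\ell|=1}c_\ell\lambda_\ell^k$ equal $\sum_{|\lambda_\ell|<1}p_\ell(k)\lambda_\ell^k$ for $k\geq k_0$ (and the conjugate for $k\leq -k_0$), hence decay exponentially, while the finitely many remaining coefficients with $|k|<k_0$ do not affect summability. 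Thus $\sum_k|\hat\mu_1(k)|<\infty$, the Fourier series converges to a continuous function $f$, and $\mu_1=f\,d\theta/(2\pi)$ is absolutely continuous, the inside-the-disk frequencies producing term by term Poisson-type kernels exactly as in the examples. Since $\mu\geq 0$ and $\mu_1$ is absolutely continuous, $\mu_2$ is the atomic part of $\mu$ and is in particular positive.

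Finally, to pass from finite $I$ to arbitrary $\psi\in[\pi(F)\Omega]^=$, I observe that the above shows every vector of the dense subspace $\pi(F)\Omega$ has spectral measure with no singular continuous part, so $\pi(F)\Omega\subseteq \mathcal H_{\mathrm{ac}}(\pi(g))\oplus\mathcal H_{\mathrm{pp}}(\pi(g))$; as this subspace is closed it contains all of $\mathcal H$, whence $\pi(g)$ has no singular continuous spectrum on $\mathcal H$ and $\mu_\psi=\mu_{\psi_{\mathrm{ac}}}+\mu_{\psi_{\mathrm{pp}}}$ for every $\psi$. The possible unit-modulus frequencies are the unit-modulus eigenvalues of the essential part $E_g$ (decorating $E_g$ by parallel edges only introduces the eigenvalue $1$), a fixed finite set depending on $g$ alone, so the atoms lie in a uniformly finite set and the decomposition survives the limit. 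I expect the genuine obstacle to be the middle step: turning the cropped central tower into an honest finite-dimensional linear map $T_{ij}$ in full generality — that is, producing the analogue of the explicit matrices of Examples \ref{ese-1}–\ref{ese-2} from the essential-part decomposition — and verifying that the unit-modulus spectrum is controlled by $E_g$ alone; the boundedness argument and the Fourier reconstruction are then routine.
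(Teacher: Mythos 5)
Your proposal is correct and follows the same skeleton as the paper's proof: reduce to vectors in the dense subspace, apply Theorem \ref{prop-structure-powers} to put a central tower of copies of the essential part inside $g_j^{-1}g^kg_i$, use finite-dimensionality of the $\Phi(m)$ to turn that tower into powers of a fixed linear map, take the Jordan decomposition, recover the measure from the resulting exponential-polynomial moment sequence, and finish with the closedness of $\mathcal H_{ac}\oplus\mathcal H_{pp}$. You diverge in two places. First, to kill the polynomial factors attached to unit-modulus eigenvalues you invoke boundedness of the moments, $|m_k|\le\|\psi\|^2$; the paper instead writes the unit-modulus contribution as a combination of derivatives of Dirac deltas (Lemma \ref{lemma-case-2}) and observes that the order-$\ge 1$ terms must vanish because a measure is a distribution of order $0$. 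The two arguments rest on the same linear-independence fact and yours is arguably more elementary, though ``boundedness forces $p_\ell$ constant'' deserves its one-line Ces\`aro-averaging justification. Second, for finiteness of the atomic support you assert that decorating $E_g$ with parallel edges only adjoins the eigenvalue $1$, so the unit-modulus frequencies form a finite set depending on $g$ alone; the paper instead runs von Neumann's ergodic theorem on $\Sigma_N(\lambda^{-1}\pi(g))$ tested against the dense family $\pi(h)\Omega$, which is what upgrades ``the atoms of $\mu_{\pi(h)\Omega}$ lie in the unit-modulus spectrum of some $\Phi(\tilde E_g(h_1,h_2))$'' to ``$\ker(\pi(g)-\lambda)=0$ for all other $\lambda$,'' i.e.\ to a statement about eigenvalues of $\pi(g)$ on all of $\mathcal H$ that survives the passage to the closure. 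This ergodic step is the only substantive ingredient of the paper's proof missing from your outline; note, however, that the paper too ultimately relies on the asserted (not fully proved) independence of the unit-modulus spectrum of $\Phi(\tilde E_g(h_1,h_2))$ from $h_1,h_2$, which is precisely the point you flagged as the genuine obstacle.
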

First of all we are going to prove the theorem 
under the assumption that $\psi=\pi(h)\Omega$.
For the sake of clarity, our proof is in turn divided into a series of preliminary lemmas.
Our plan is to 
 find a formula for the moments of $g$, which 
 will allow us to reconstruct the measure thanks to the theory of distributions. 
 \\
 Let us   set some notations. As in Section \ref{section-examples}, we denote the moments by 
 $$
 \mu_p:=\langle \pi(g^p) \psi, \psi\rangle = \langle \pi(g^p h) \Omega, \pi(h)\Omega\rangle =  \langle \pi(h^{-1} g^p h) \Omega, \Omega\rangle
 $$ 
Thanks to Theorem \ref{prop-structure-powers} the element $h^{-1} g^p h$ can be expressed in terms of some diagrams $\tilde E_g$, which depend on  $g$ and $h$. 
By the Jordan-Chevalley decomposition, the operator $\Phi(\tilde E_g)$ can be decomposed
as $\Phi(\tilde E_g)=x_{ss}+x_n$, where $x_{ss}$ is the semisimple part, $x_n$ is the nilpotent part, $x_n^r=0$ for some $r\in\mathbb N\cup\{0\}$, and $[x_n,x_{ss}]=0$. 
We denote by $\{v_l\}_{l\in A}$ an orthonormal basis of eigenvectors for $x_{ss}$ and by $\{\lambda_l\}_{l\in A}$ the corresponding eigenvalues, that is $x_{ss} v_l=\lambda_lv_l$. We observe that $|\lambda_l|\leq 1$ for all $l\in A$ because $\Phi(\tilde E_g)$ may be expressed as the composition of isometries and co-isometries.
\\
By using  Theorem \ref{prop-structure-powers}
the elements $h^{-1} g^p h$ have the following 
 form
\begin{eqnarray}\label{figure3}
\qquad\qquad\qquad h^{-1} g^p h = \vpic {fig46CC} {1.75in} \quad \forall\;  p\geq k
\end{eqnarray}
where $S_{\pm}$ are some strand diagrams and $k$ is a suitable natural number. \\
As done in the examples, we want to use 
the former diagram to compute the moments of $g$.  
 If we crop the diagram \eqref{figure3} in correspondence of the red dashed line, we may express $\mu_p$ as   $\langle \Phi(\tilde E_g)^{p-k+1} \xi, \eta\rangle$, where $\xi=\Phi(S_{-})x$ 
and $\eta =\Phi(S_{+})^*x$. 
We have $\xi =\sum_{l\in A} \xi_l v_l$ and $\eta =\sum_{l\in A} \eta_l v_l$.
\\
The following lemma has to do with the form of the moments.
 \begin{lemma}\label{lemma-moments}
For $p\geq h$ we have that
\begin{equation}\label{formula-moments}
\begin{aligned}
\mu_p & =\sum_{q=0}^{r-1}\sum_{l\in A} \binom{p-h+1}{q}  \xi_l\lambda_l^{p-h+1-q}\langle x_{n}^q v_l,\eta\rangle \\
				& = \sum_{q=0}^{\min\{r-1,p-h+1\}} \sum_{l\in A}  c_{l,q}  \lambda_l^{p-h+1-q} \prod_{k=h-1}^{h+q-2} (p-k)
\end{aligned}
\end{equation}
where $c_{l,q}:=   \xi_l\langle x_{n}^q v_l,\eta\rangle /q!$ and
$\binom{p}{q}=0$ if $q>p$.
\end{lemma}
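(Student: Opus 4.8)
The plan is to reduce everything to a single operator-power computation and then apply the Jordan--Chevalley decomposition set up just before the lemma. Cropping the diagram \eqref{figure3} along the red dashed line and invoking Theorem \ref{prop-structure-powers} gives, for $p$ at least the relevant threshold,
$$\mu_p=\langle \Phi(\tilde E_g)^{\,p-h+1}\,\xi,\eta\rangle,\qquad \xi=\sum_{l\in A}\xi_l v_l .$$
Writing $N:=p-h+1$, the entire content of the lemma is the expansion of $\Phi(\tilde E_g)^N=(x_{ss}+x_n)^N$ paired against $\eta$.

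First I would use that $x_{ss}$ and $x_n$ commute to apply the ordinary binomial theorem,
$$(x_{ss}+x_n)^N=\sum_{q=0}^{N}\binom{N}{q}x_{ss}^{\,N-q}x_n^{\,q},$$
and then discard every term with $q\geq r$, since $x_n^{\,r}=0$; this truncates the sum at $q=\min\{r-1,N\}$, and the convention $\binom{N}{q}=0$ for $q>N$ lets me write the upper limit uniformly as $r-1$. The second ingredient is that $x_n$ preserves the eigenspaces of $x_{ss}$: from $[x_n,x_{ss}]=0$ and $x_{ss}v_l=\lambda_l v_l$ one gets $x_{ss}(x_n^{\,q}v_l)=\lambda_l\,x_n^{\,q}v_l$, hence $x_{ss}^{\,N-q}x_n^{\,q}v_l=\lambda_l^{\,N-q}x_n^{\,q}v_l$.

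Next I would expand $\xi$ in the eigenbasis and let the operators act on $\xi$ rather than on $\eta$, so that only this eigenvalue action is used; by linearity of the inner product this yields
$$\mu_p=\sum_{q=0}^{r-1}\binom{N}{q}\sum_{l\in A}\xi_l\,\lambda_l^{\,N-q}\langle x_n^{\,q}v_l,\eta\rangle,$$
which is the first displayed line. For the second line I rewrite the binomial coefficient as a falling factorial and reindex,
$$\binom{N}{q}=\frac{1}{q!}\prod_{j=0}^{q-1}(N-j)=\frac{1}{q!}\prod_{k=h-1}^{h+q-2}(p-k),$$
the last step being the substitution $k=h-1+j$, under which $N-j=p-k$. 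Collecting $1/q!$ together with $\xi_l\langle x_n^{\,q}v_l,\eta\rangle$ into $c_{l,q}$ and restoring the sharp truncation $\min\{r-1,N\}$ produces the second formula.

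There is no serious obstacle here: the statement is essentially a bookkeeping identity, with all the real work done by Theorem \ref{prop-structure-powers} and by the choice to diagonalise the semisimple part $x_{ss}$. The two points that demand care are (i) matching the upper summation limits, i.e.\ checking that the nilpotency bound $x_n^{\,r}=0$ truncates the binomial sum exactly as claimed and is compatible with the stated convention $\binom{p}{q}=0$ for $q>p$, so that one may write $r-1$ in the first line but $\min\{r-1,p-h+1\}$ in the second; and (ii) the eigenspace-preservation step, which is precisely where the commutation $[x_n,x_{ss}]=0$ is used. A harmless edge case is $\lambda_l=0$, where $\lambda_l^{\,N-q}$ is read as $0$ whenever $N>q$; the corresponding terms simply drop out and do not affect the result.
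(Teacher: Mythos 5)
Your proof is correct and follows essentially the same route as the paper's: expand $(x_{ss}+x_n)^{p-h+1}$ by the binomial theorem using $[x_{ss},x_n]=0$, truncate at $q=r-1$ by nilpotency, and evaluate $x_{ss}$ on the eigenbasis expansion of $\xi$. You are in fact slightly more explicit than the paper, which stops at the first displayed line and leaves the falling-factorial rewriting $\binom{p-h+1}{q}=\frac{1}{q!}\prod_{k=h-1}^{h+q-2}(p-k)$ unverified.
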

\begin{proof}
By using    the Jordan-Chevalley decomposition of $\Phi(\tilde E_g)$ we have
\begin{align*}
\mu_p  & =\langle (x_{ss}+x_{n})^{p-h+1} \xi,\eta\rangle=\sum_{q=0}^{p-h+1}\binom{p-h+1}{q} \langle x_{n}^qx_{ss}^{p-h+1-q} \xi,\eta\rangle \\
& =\sum_{q=0}^{r-1}\sum_{l\in A} \binom{p-h+1}{q}  \xi_l\lambda_l^{p-h+1-q}\langle x_{n}^q v_l,\eta\rangle\; .
\end{align*}
\end{proof}
The 
 linear functional $I_{g;\psi}: C(\mathbb T)\to \mathbb C$ 
 is determined by its restriction to  the  Laurent polynomials in $e^{i\theta}$, here denoted by $\mathbb C[e^{i\theta},e^{-i\theta}]$. In particular, we have
$I_{g;\psi}(e^{ik\theta}):=\mu_k$ and $I_{g;\psi}(e^{-ik\theta}):=\mu_{-k}=\bar\mu_k$ for $k\geq 0$. 

We may partition $A$ into two subsets
$$
A_1 =\{l\in A \; | \; |\lambda_l|<1\}\qquad A_2 =\{l\in A \; | \; |\lambda_l|=1\}\; .
$$
and define three linear functionals on $C(\mathbb{T})$ by
\begin{align*}
I_{g,1;\psi}(e^{ip\theta}) & :=\left\{ \begin{array}{ll} 
 		\sum_{q=0}^{\min\{r-1,p-h+1\}}\sum_{l\in A_1} 
c_{l,q} 
\lambda_l^{p-h+1-q} \prod_{k=h-1}^{h+q-2} (p-k) &  \textrm{ for $p\geq h$}\\
		0 & \textrm{ for $0\leq p<h$}
 		\end{array}\right.
		\\
I_{g,2;\psi}(e^{ip\theta}) & :=\left\{ \begin{array}{ll} 
 		\sum_{q=0}^{\min\{r-1,p-h+1\}}\sum_{l\in A_2} 
  c_{l,q}
 \lambda_l^{p-h+1-q} \prod_{k=h-1}^{h+q-2} (p-k) & \textrm{ for $p\geq h$}\\
		0 &  \textrm{ for $0\leq p<h$}\\
 		\end{array}\right.\\
I_{g,3;\psi}(e^{ip\theta}) & :=\left\{ \begin{array}{ll} 0 &  \textrm{ for $p\geq h$}\\
	\mu_p &  \textrm{ for $0\leq p<h$}\\
 		\end{array}\right.
\end{align*}
Accordingly, 
we have the decomposition $I_{g;\psi} = I_{g,1;\psi}+I_{g,2;\psi}+I_{g,3;\psi}$,
\bigskip

\begin{lemma}\label{lemma-case-1}
The functional $I_{g,1;\psi}: C(\mathbb T)\to\mathbb C$ is of the form $I_{g,1;v}(f)=\int_{0}^{2\pi}\; f(\theta) \; \bar f_1(\theta)d\theta/(2\pi)$ for some $f_1\in C(\mathbb T)$.
\end{lemma}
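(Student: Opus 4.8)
The plan is to show that the numbers $a_p:=I_{g,1;\psi}(e^{ip\theta})$ form an exponentially decaying two-sided sequence, so that the associated Fourier series converges to a continuous function which we may take (up to conjugation) as the density $\bar f_1$, exactly as in the Poisson-kernel computation for the generator $A$ in Section \ref{section-examples}. First I would record the coefficients: by construction $a_p=0$ for $0\le p<h$, while for $p\ge h$ Lemma \ref{lemma-moments} gives
$$
a_p=\sum_{q=0}^{\min\{r-1,\,p-h+1\}}\ \sum_{l\in A_1}c_{l,q}\,\lambda_l^{\,p-h+1-q}\prod_{k=h-1}^{h+q-2}(p-k),
$$
and the remaining coefficients are fixed by the Hermitian symmetry $a_{-p}=\overline{a_p}$ that makes $I_{g,1;\psi}$ the restriction of a real functional, consistent with $\mu_{-p}=\overline{\mu_p}$ for the full moment sequence.

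The key step is a uniform decay estimate. Since $\Phi(m)$ is finite dimensional, the index set $A_1$ is finite, and by the very definition of $A_1$ we have $\rho_0:=\max_{l\in A_1}|\lambda_l|<1$. Each product $\prod_{k=h-1}^{h+q-2}(p-k)$ is a polynomial in $p$ of degree $q\le r-1$, the summation ranges over $q\in\{0,\dots,r-1\}$ and $l\in A_1$ (a bounded number of terms, independent of $p$), and for the indices with $\lambda_l=0$ the corresponding summand vanishes for all large $p$. Consequently there is a constant $C>0$ with $|a_p|\le C\,(1+|p|)^{r-1}\rho_0^{\,|p|}$ for every $p\in\mathbb Z$, so the geometric factor dominates the polynomial one and $(a_p)_{p\in\mathbb Z}\in\ell^1(\mathbb Z)$; in fact the sequence decays faster than any polynomial.

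With absolute summability in hand, I would set $f_1(\theta):=\sum_{p\in\mathbb Z}\overline{a_p}\,e^{ip\theta}$, which converges uniformly and hence defines a continuous (indeed real-analytic) function on $\mathbb T$, since $|\overline{a_p}|=|a_p|$ obeys the same bound. Term-by-term integration, legitimate by uniform convergence, then yields $\int_0^{2\pi}e^{ip\theta}\,\overline{f_1(\theta)}\,d\theta/(2\pi)=a_p=I_{g,1;\psi}(e^{ip\theta})$ for every $p\in\mathbb Z$. Thus the bounded functional $f\mapsto\int_0^{2\pi}f(\theta)\,\overline{f_1(\theta)}\,d\theta/(2\pi)$ agrees with $I_{g,1;\psi}$ on the linear span of the $e^{ip\theta}$, and since the Laurent polynomials are dense in $C(\mathbb T)$ (Stone--Weierstrass, or Fej\'er), the two functionals coincide on all of $C(\mathbb T)$, which is precisely the asserted form $I_{g,1;\psi}(f)=\int_0^{2\pi}f(\theta)\,\overline{f_1(\theta)}\,d\theta/(2\pi)$.

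The only genuinely quantitative point, and hence the main obstacle, is the uniform exponential estimate on $a_p$: one must control the polynomial growth coming from the generalized-eigenvector (nilpotent) contributions $\prod_{k=h-1}^{h+q-2}(p-k)$ against the geometric decay $|\lambda_l|^{p}$, uniformly over the finite set $A_1$. This is exactly where the hypothesis $|\lambda_l|<1$ for $l\in A_1$ is used, and it is what forces absolute continuity with a continuous density; by contrast the indices in $A_2$ with $|\lambda_l|=1$ do not decay and will produce the point part treated separately. Once the $\ell^1$ bound is established, the passage to a continuous density and the interchange of summation and integration are routine.
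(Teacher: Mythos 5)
Your proof is correct and follows essentially the same route as the paper: both arguments observe that, since $|\lambda_l|<1$ for $l\in A_1$, the coefficients $I_{g,1;\psi}(e^{ip\theta})$ decay geometrically (the polynomial factors $\prod_{k=h-1}^{h+q-2}(p-k)$ being dominated), so the associated Fourier series converges absolutely to a continuous function $f_1$ and the functional is integration against $\bar f_1\,d\theta/(2\pi)$. You merely make explicit the estimate $|a_p|\le C(1+|p|)^{r-1}\rho_0^{|p|}$ and the density-of-Laurent-polynomials step, which the paper leaves implicit.
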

\begin{proof}
By formula \eqref{formula-moments} and the hypothesis $|\lambda_i|<1$ for all $i\in A_1$, we have that the following series is absolutely convergent
\begin{align*}
& \sum_{p\in \mathbb{Z}} I_{g,1;\psi}(e^{ip\theta})e^{ip\theta}  = \sum_{p\geq h}\sum_{q=0}^{\min\{r-1,p-h+1\}}\sum_{ l\in A_1}   c_{l,q}   
 \lambda_l^{p-h+1-q} e^{ip\theta}  \prod_{k=h-1}^{h+q-2} (p-k) \\
 &  \quad  +\sum_{p\geq h}\sum_{q=0}^{\min\{r-1,p-h+1\}}\sum_{  l\in A_1}    \bar  
 c_{l,q}  
 \bar \lambda_l^{p-h+1-q} e^{-ip\theta}  \prod_{k=h-1}^{h+q-2} (p-k)
\end{align*}
We denote by $f_1(\theta)$ the corresponding function in $C(\mathbb T)$. 
It 
follows that the functional $I_{g,1;\psi}$ is induced by the measure $\bar f_1d\theta/(2\pi)$.
\end{proof}

\begin{lemma}\label{lemma-case-2}
It holds
$$
I_{g,2;\psi}(\cdot)=\sum_{q=0}^{r-1}  \sum_{l\in A_2}  c_{l,q} (-1)^q\delta_{\lambda_l}^{(q)}(e^{i(-h+1)\theta}\; \cdot \; )-\int_{0}^{2\pi}\; \cdot \; \tilde f(\theta)d\theta/(2\pi)
$$
where $\delta_{\lambda_l}^{(q)}(e^{ip\theta} f):=(-1)^q (e^{ip\theta} f)^{(q)}(\lambda_i)$, $\tilde f(\theta)\in \mathbb C[e^{i\theta},e^{-i\theta}]$. 
\end{lemma}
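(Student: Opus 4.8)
The plan is to treat $I_{g,2;\psi}$ as a tempered distribution on $\mathbb T$ and to reconstruct it by summing its (distributional) Fourier series, exactly as in Lemma \ref{lemma-case-1} but now with the poles sitting \emph{on} the unit circle. Concretely, I would form $\sum_{p\in\mathbb Z} I_{g,2;\psi}(e^{ip\theta})\,e^{-ip\theta}$, using the moment formula \eqref{formula-moments} restricted to $A_2$ for $p\ge h$, the conjugate values for $p\le -h$, and the fact that $I_{g,2;\psi}(e^{ip\theta})=0$ for $|p|<h$.

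For a fixed $l\in A_2$ write $\lambda_l=e^{i\theta_l}$. The key observation is that the coefficient $q!\binom{p-h+1}{q}\lambda_l^{\,p-h+1-q}$ occurring in \eqref{formula-moments} equals $\big(\tfrac{d}{dz}\big)^q z^{\,p-h+1}\big|_{z=\lambda_l}$; summing the forward series ($p\ge h$) therefore produces, up to the shift $e^{i(-h+1)\theta}$, the $q$-th $z$-derivative of a geometric series, i.e. the boundary value of a rational function with a pole of order $q+1$ at $z=\lambda_l\in\mathbb T$. Carrying this out for the forward series together with its conjugate (the $p\le -h$ part) and applying the Sokhotski--Plemelj jump relation across $\mathbb T$, the two one-sided boundary values combine so that the \emph{singular} contribution at each $\theta_l$ is exactly $c_{l,q}(-1)^q\delta^{(q)}_{\lambda_l}(e^{i(-h+1)\theta}\,\cdot\,)$. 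Everything else — the finitely many terms with $0\le p<h$, together with the regular parts of the boundary values — is a finite Laurent polynomial in $e^{i\theta}$, which I would take as $\tilde f$.

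The step I expect to be the main obstacle is controlling precisely this leftover, since a priori the same boundary-value computation also produces a principal-value singularity $\cot\tfrac{\theta-\theta_l}{2}$ at each $\theta_l$, whose coefficient is proportional to the imaginary part of $c_{l,0}\lambda_l^{-h+1}$; such a term is neither a derivative of a Dirac mass nor an element of $\mathbb C[e^{i\theta},e^{-i\theta}]$. To rule it out I would argue that $I_{g,2;\psi}$ is in fact a genuine measure: since $\pi(g)$ is unitary, $I_{g;\psi}$ is integration against the \emph{positive} spectral measure $\mu_{g;\psi}$, while $I_{g,1;\psi}$ is absolutely continuous by Lemma \ref{lemma-case-1} and $I_{g,3;\psi}$ has finitely supported moments and hence is given by a trigonometric polynomial; therefore $I_{g,2;\psi}=I_{g;\psi}-I_{g,1;\psi}-I_{g,3;\psi}$ is a difference of measures and carries no principal-value part. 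This forces each such coefficient to vanish, so the singular part of $I_{g,2;\psi}$ is purely atomic and the remainder is an honest Laurent polynomial.

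Finally, I would record the structural simplification that makes the atoms honest Dirac masses: because $\Phi(\tilde E_g)$ is a composition of isometries and co-isometries it is a contraction, so its unit-modulus spectrum is semisimple; equivalently $x_n v_l=0$ for $l\in A_2$, whence $c_{l,q}=0$ for $q\ge 1$ and only the $q=0$ terms survive on $A_2$. Assembling the resulting Dirac part with the Laurent polynomial $\tilde f$ then gives the claimed identity.
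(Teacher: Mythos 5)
Your key computation --- that $q!\binom{p-h+1}{q}\lambda_l^{p-h+1-q}$ is exactly $\bigl(\tfrac{d}{dz}\bigr)^q z^{p-h+1}\big|_{z=\lambda_l}$, i.e.\ $(-1)^q\delta_{\lambda_l}^{(q)}(e^{i(p-h+1)\theta})$ --- is in fact the \emph{entire} proof in the paper: the authors simply evaluate both sides on the monomials $e^{ip\theta}$ for $p\geq h$ and let $\tilde f$ absorb the finitely many remaining Fourier coefficients, since both sides are linear functionals determined by their values on Laurent monomials. Everything else in your argument is a genuinely different, and much heavier, route: you insist on actually \emph{summing} the Fourier series of $I_{g,2;\psi}$ as a distribution, which forces you through boundary values of rational functions with poles on $\mathbb T$, the Sokhotski--Plemelj relation, and the resulting principal-value $\cot$ terms. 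None of that is needed for the identity as stated, because the right-hand side is already a well-defined functional (a finite sum of point evaluations of derivatives plus integration against a Laurent polynomial) and the lemma only asserts termwise agreement; the PV obstacle you work hard to eliminate simply never arises in the paper's approach. Moreover, your way of killing the PV part --- deducing that $I_{g,2;\psi}$ is a difference of measures from the positivity of the spectral measure and from Lemma \ref{lemma-case-1} --- imports precisely the input that the paper deliberately postpones to the proof of Theorem \ref{main-thm}, where the fact that $\mu_{g;\psi}$ is a measure (a distribution of order $0$) is used to force the singular terms of order $>0$ to vanish. Your argument is not circular, but it inverts the paper's architecture and proves a strengthened statement inside the lemma.

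Two further remarks. First, your observation that $\Phi(\tilde E_g)$ is a contraction and hence has semisimple peripheral spectrum, so that $x_n v_l=0$ and $c_{l,q}=0$ for $q\geq 1$ and $l\in A_2$, is correct and is a nice simplification not present in the paper (the paper instead rules out the $q\geq 1$ contributions a posteriori via the order-$0$ argument); note, though, that with this in hand the $\delta^{(q)}$ terms in the statement become vacuous, which again shows you are proving more than the lemma asks. Second, your analysis of the negative-frequency side (the conjugate series for $p\leq -h$ and the reality condition on $c_{l,0}\lambda_l^{-h+1}$ needed for the atoms to have conjugate-symmetric moments) actually engages with a point the paper's one-line proof glosses over when it declares that checking $p\geq h$ ``is enough''; so while your route is longer than necessary, it is more careful on exactly the point where the published proof is thinnest.
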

\begin{proof}
It is enough to check the equality on $e^{ip\theta}$, $p\geq h$. 
Indeed, we have
\begin{align*}
& \sum_{q=0}^{r-1}   \sum_{l\in A_2}  c_{l,q}  (-1)^q\delta_{\lambda_l}^{(q)}(e^{i(p-h+1)\theta}) = \\
& =\sum_{q=0}^{\min\{r-1,p-h+1\}}  \sum_{l\in A_2}  c_{l,q} 
\lambda_l^{p-h+1-q}  \prod_{k=h-1}^{h+q-2} (p-k)
= I_{g,2;\psi}(e^{ip\theta})\; .
\end{align*}
\end{proof}
We are at last in a position to prove the main result of this paper.
\begin{proof}[Proof of Theorem \ref{main-thm}]
By the previous discussion, it follows that 
$I_{g;\psi}$ is a distribution of order strictly greater than $0$, unless
$$
\sum_{q=0}^{r-1}   \sum_{l\in A_2}   c_{l,q} (-1)^q\delta_{\lambda_l}^{(q)}(e^{i(-h+1)\theta}\; \cdot \; )=0
$$
Since a measure is a distribution of order $0$, then we have proven the theorem under the hypothesis that $\psi=\pi(h)\Omega$.

Suppose now that $\psi$ is a generic vector in $[\pi(F)\Omega]^=$. 
The Hilbert space $\mathcal H$   can be decomposed into $\mathcal H_{ac}\oplus\mathcal  H_{sc}\oplus \mathcal H_{pp}$, where $\mathcal H_{ac}=\{v\in \mathcal H\; | \;  \mu_{g;v}$ is absolutely continuous$\}$, $\mathcal H_{sc}=\{v\in \mathcal H\; | \;  \mu_{g;v}$ is  continuous singular$\}$, $\mathcal H_{pp}=\{v\in \mathcal H\; | \; \mu_{g;v}$ is pure point$\}$. So far we have shown that there 
is a family of vectors in $\mathcal H_{ac}\oplus\mathcal  H_{pp}$ that span 
a dense subspace of $\mathcal H$.  
 This means that the spectral measure $\mu_{g;\psi}$    cannot contain a component which is continuous singular.\\
All we have to do now is to prove that the support of 
$\mu_2$
is finite.
Thanks to   von Neumann's ergodic theorem we know that, for any contraction $V$ on an Hilbert space and any  $\lambda\in\mathbb{T}$, the sequence $\Sigma_N(\lambda^{-1} V):=\sum_{i=0}^N (\lambda^{-1} V)^i/N$ converges in the strong operator topology to the projection onto the eigenspace Ker$(V-\lambda)$. 
By Theorem \ref{prop-structure-powers} we may express $h_2^{-1}g^ph_1$ as $S_+\tilde{E}(h_1,h_2)^{p-k+1}S_-$ for $p\geq k$,  
where $S_+$, $\tilde{E}(h_1,h_2)$, $S_-$ are suitable strand diagrams.
Despite  the fact that  $\Phi(\tilde{E}(h_1,h_2))$ depends on $h_1, h_2\in F_n$, 
its spectrum is only a function of $\pi(g)$. 
We claim that $\Sigma_N(\lambda^{-1} \pi(g))$ converges to $0$ for every $\lambda$ not in ${\rm sp}(\Phi(\tilde E_g(h_1,h_2)))\cap \mathbb{T}$. 
To this end it is enough to show that the limit is $0$ in the weak topology and actually
it suffices to show $\langle \Sigma_N(\lambda^{-1} \pi(g)) \pi(h_1)\Omega , \pi(h_2)\Omega\rangle\to 0$ for every $h_1$, $h_2\in F_n$.
Simple computations lead to
\begin{align*}
\langle \Sigma_N(\lambda^{-1} \pi(g)) \pi(h_1)\Omega , \pi(h_2)\Omega\rangle 
& = \langle \sum_{i=0}^N \frac{(\lambda^{-1}\pi(g))^i}{N} \pi(h_1)\Omega , \pi(h_2)\Omega\rangle\\
& = \langle \sum_{i=0}^N \frac{(\lambda^{-1}(\Phi(E_g(h_1,h_2)))^i}{N} \xi_1,\xi_2\rangle\\
& = \langle \Sigma_N(\lambda^{-1}  \Phi(E_g(h_1,h_2))) \xi_1,\xi_2\rangle
\end{align*}
Since $\Phi(E_g(h_1,h_2)))$ is an endomorphism of a finite dimensional vector space, it has finitely many eigenvalues and  $\Sigma_N(\lambda^{-1}  \Phi(E_g(h_1,h_2)))$ has non-trivial limit only for $\lambda\in{\rm sp}(\Phi(\tilde E_g(h_1,h_2)))$. 
From this discussion it follows   that the support of the pure point component is finite. 
\end{proof}


\begin{thebibliography}{99}



\bibitem{B} J. Belk, Thompson's group F. Ph.D. Thesis (Cornell University). arXiv preprint:0708.3609 (2007).

\bibitem{BM} J. Belk, F. Matucci, Conjugacy and dynamics in Thompson's groups. \emph{Geometriae Dedicata} {\bf 169} (2014): 239-261.

\bibitem{Brown} K. S. Brown, Finiteness properties of groups. Proceedings of the Northwestern conference on cohomology of groups (Evanston, Ill., 1985). \emph{J. Pure Appl. Algebra} {\bf 44} (1987), no. 1--3, 45--75.

\bibitem{CFP}
J.W. Cannon, W.J Floyd,   W.R. Parry, 
Introductory notes on Richard Thompson's groups.
{\em L'Enseignement  Math\'ematique} 
{\bf 42} (1996): 215--256

\bibitem{GS} V. Guba, M. Sapir, Diagram groups, {\bf  620}, {\em American Mathematical Soc.}, 1997.

\bibitem{jo1}V.F.R. Jones, Index for subfactors, 
{\em Invent. Math.} {\bf 72} (1983), 1--25.

\bibitem{J9}
V.F.R. Jones,
On knot invariants related to some statistical mechanical models. 
{\em Pacific Journal of Mathematics}, {\bf 137} (1989), 311--334.


\bibitem{jo2}V.F.R. Jones, Planar Algebras I, preprint.
	math/9909027

\bibitem{J22}
V.F.R. Jones,
In and around the origin of quantum groups.
{\em Prospects in mathematical physics.} 
 {\em Contemp. Math., 437    Amer. Math. Soc.}  (2007) 101--126.
math.OA/0309199.

\bibitem{Jo16}V.F.R. Jones, A no-go theorem for the continuum  limit of a quantum spin chain. \emph{Comm. Math. Phys.} {\bf 357} (2018), 295--317.



\bibitem{JMS}
V. Jones, S. Morrison, and N. Snyder, (2013).
The classification of subfactors of index $\leq 5$.
\emph{Bulletin of the American Mathematical Society}, {\bf 51} (2014): 277--327.

\bibitem{MPS} S. Morrison, E. Peters, and N. Snyder. Categories generated by a trivalent vertex. \emph{Selecta Mathematica} {\bf 23} (2017): 817--868.

\bibitem{Pen}
R. Penrose, 
Applications of negative dimensional tensors.
{\em Applications of Combinatorial Mathematics}, Academic Press (1971), 221--244.






\end{thebibliography}
\end{document}